\newtheorem{theorem}{Theorem}[section]
\newtheorem{lemma}[theorem]{Lemma}
\newtheorem{proposition}[theorem]{Proposition}
\newtheorem{corollary}[theorem]{Corollary}
\theoremstyle{definition}
\newtheorem{definition}[theorem]{Definition}
\newtheorem{remark}[theorem]{Remark}
\begin{document}

\title{Uniform Diophantine approximation and best approximation polynomials}

\author{Johannes Schleischitz} 

\thanks{Research supported by the Schr\"odinger scholarship J 3824 of the Austrian Science Fund FWF.\\
Department of Mathematics and Statistics, University of Ottawa, Canada  \\ 
johannes.schleischitz@univie.ac.at}

\begin{abstract}
Let $\zeta$ be a real transcendental number.
We introduce a new method to find upper bounds for the classical 
exponent $\widehat{w}_{n}(\zeta)$ concerning uniform
polynomial approximation. Our method is
based on the
parametric geometry of numbers introduced by Schmidt and Summerer,
and transference of the original
approximation problem in dimension $n$ to suitable
higher dimensions.
For large $n$, we can provide an unconditional bound of order
$\widehat{w}_{n}(\zeta)\leq 2n-2+o(1)$. While
this improves the bound of order $2n-\frac{3}{2}+o(1)$ due to Bugeaud and the 
author, it is unfortunately slightly weaker than what can be
obtained when incorporating a recently proved conjecture
of Schmidt and Summerer. 
However, the method also enables us to
establish significantly stronger conditional bound upon a certain
presumably weak
assumption on the structure of the best approximation polynomials.
Thereby we provide serious evidence that the exponent should be
reasonably smaller than the known upper bounds.
\end{abstract}

\maketitle

{\footnotesize 
{\em Keywords}: uniform approximation, geometry of numbers, successive minima \\
Math Subject Classification 2010: 11H06, 11J13, 11J82}


\section{An exponent of uniform Diophatine approximation}

\subsection{Introduction} \label{intro}

A classical problem in Diophantine approximation is 
to study the approximation to a real number $\zeta$ by algebraic
real numbers $\alpha$ of bounded degree. 
For example, the famous
Wirsing problem~\cite{wirsing} asks if for any real number $\zeta$ 
and any $\epsilon>0$ the estimate
\[
\vert \alpha-\zeta\vert \leq H(\alpha)^{-n-1+\epsilon}
\]
has infinitely many solutions in real algebraic numbers
$\alpha$ of degree at most $n$. 
Here $H(\alpha)=H(P_{\alpha})$ denotes the naive height of the minimal polynomial $P_{\alpha}$
of $\alpha$ over $\mathbb{Z}$ with coprime coefficients, 
that is the maximum modulus among the coefficients of $P_{\alpha}$.
In case $n=1$ this is an immediate consequence
of Dirichlet's Theorem.
It is further known that the answer to Wirsing's problem
is positive for $n=2$, see~\cite{davs},
as well as for any $n$ and Lebesgue almost all real numbers.
The problem of making $\vert \zeta-\alpha\vert$
small naturally
leads to the problem of making $\vert P(\zeta)\vert$  small
among integer polynomials $P$, with the same degree and (naive)
height restrictions as for $\alpha$.
Indeed with the above notation it is straight forward
to show that $\vert P_{\alpha}(\zeta)\vert\ll_{n,\zeta} 
 H(P_{\alpha})\cdot \vert \alpha-\zeta\vert$,
and conversely $\vert \alpha-\zeta\vert\ll_{n,\zeta} 
 H(P_{\alpha})^{n}\cdot \vert P_{\alpha}(\zeta)\vert$ 
can be found in~\cite[Lemma~A8]{bugbuch}. 
This connection motivates to define the 
classical exponents of approximation $w_{n}(\zeta)$
and $\widehat{w}_{n}(\zeta)$. They are given as
the supremum of $w\in{\mathbb{R}}$ for which the system 
\begin{equation}  \label{eq:w}
H(P) \leq X, \qquad  0<\vert P(\zeta)\vert \leq X^{-w},  
\end{equation}
has a solution in an integer polynomial
$P$ of degree at most $n$ for arbitrarily large $X$, and all large $X$, respectively. The exponents
$w_{n}(\zeta)$ already date back to Mahler~\cite{maler}.
The most fundamental property of these exponents that stems
from Dirichlet's Theorem is
\begin{equation} \label{eq:wmono}
w_{n}(\zeta)\geq \widehat{w}_{n}(\zeta)\geq n.
\end{equation}
Moreover we want to mention the obvious relations
\begin{equation} \label{eq:wmonos}
w_{1}(\zeta)\leq w_{2}(\zeta)\leq \cdots, 
\qquad \widehat{w}_{1}(\zeta)\leq \widehat{w}_{2}(\zeta)\leq \cdots. 
\end{equation}
For $\zeta$ real algebraic of degree $d$, Schmidt's Subspace 
Theorem yields
\begin{equation} \label{eq:alg}
w_{n}(\zeta)= \widehat{w}_{n}(\zeta)= \min\{ d-1, n\}.
\end{equation}
In this paper we investigate the uniform exponents
$\widehat{w}_{n}(\zeta)$ for transcendental $\zeta$.
Davenport and Schmidt~\cite[Theorem~2b]{davsh}, although not
using our notation, established
the upper bound $\widehat{w}_{n}(\zeta)\leq 2n-1$
for any transcendental real number $\zeta$. This 
appears a little surprising in some sense,
as for $\mathbb{Q}$-linearly independent vectors
$\underline{\zeta}$,
the corresponding uniform exponent can attain any value 
in $[n,+\infty]$ when $n\geq 2$. Concrete examples of 
$\underline{\zeta}\in\mathbb{R}^{n}$
with prescribed uniform exponent in the 
allowed interval can be readily derived
from~\cite[Theorem~2.5]{ich1} with a suitable choice of the occurring parameters $\eta_{i}$, and
Mahler's theorem on polar reciprocal bodies. The coordinates 
of $\underline{\zeta}$
can be chosen Liouville numbers in Cantor's middle third set.
The deep theorem by Roy~\cite{royann} yields 
another non-constructive proof.
For $n=2$, from~\cite[Theorem~1a]{davsh}
concerning the dual simultaneous approximation problem
and Jarn\'ik's identity~\cite{jarid} the estimate 
$\widehat{w}_{2}(\zeta)\leq 2\cdot 2-1=3$ can be improved to 
\begin{equation} \label{eq:najo}
\widehat{w}_{2}(\zeta)\leq \frac{3+\sqrt{5}}{2}= 2.6180\ldots.
\end{equation}
Surprisingly this was shown to be sharp by Roy~\cite{royyy}.
Numbers with $\widehat{w}_{2}(\zeta)>2$ and the 
set of values 
taken by $\widehat{w}_{2}(\zeta)$ when
$\zeta$ runs through the transcendental real numbers, have been intensely investigated, see for 
example~\cite{buglau},~\cite{f1},~\cite{f2},~\cite{f3},~\cite{rroy},~\cite{royexp}. Much less is known for $n\geq 3$.
It is still an open problem to decide
if $\widehat{w}_{n}(\zeta)>n$ can 
occur, and
the upper bound $2n-1$ had not been improved
for almost 50 years until recently it was shown that
\begin{equation} \label{eq:buschlei}
\widehat{w}_{n}(\zeta)\leq n-\frac{1}{2}+\sqrt{n^2-2n+\frac{5}{4}}, \qquad\qquad n\geq 2,
\end{equation}
in \cite{buschl}. This is sharp 
again for $n=2$, and of order $2n-\frac{3}{2}+\varepsilon_{n}$ 
with $\varepsilon_{n}>0$ of order $O(n^{-1})$ for larger $n$. 
For $n=3$, the confirmation of a special case of a conjecture of Schmidt 
and Summerer~\cite{sums} on the quotient $w_{n}(\zeta)/\widehat{w}_{n}(\zeta)$
implied
the stronger bound
\begin{equation} \label{eq:schleibu}
\widehat{w}_{3}(\zeta)\leq 3+\sqrt{2}=4.4142\ldots.
\end{equation}
as obtained in the same paper~\cite{buschl}. Due to a 
surprising very recent proof of the conjecture in full
generality announced by Marnat and Moshchevitin~\cite{mamo}, 
in fact for
$n\geq 4$ the bound \eqref{eq:buschlei} can
be improved as well.
As pointed out in~\cite{ichglasgow} we can deduce
\begin{equation} \label{eq:frwe}
\widehat{w}_{n}(\zeta)\leq \max\{2n-2,w(n)\}
\end{equation}
for any real $\zeta$, where $w=w(n)$ as the unique
solution of the polynomial identity
\begin{equation} \label{eq:schleim}
\frac{(n-1)w}{w-n}-w+1= \left( \frac{n-1}{w-n}\right)^{n}
\end{equation}
in the interval $[n,2n-1)$. Precisely for $n\geq 10$, we have $w(n)<2n-2$ and \eqref{eq:frwe} becomes
$\widehat{w}_{n}(\zeta)\leq 2n-2$. 
The asymptotic formula 
\begin{equation} \label{eq:asyb}
w(n)= 2n-C+\epsilon_{n}, \qquad\quad
C=2.2564\ldots,
\end{equation}
with $\epsilon_{n}>0$ which tends to $0$ as $n\to\infty$
was further shown
in~\cite[Theorem~3.1]{ichglasgow}. As the bounds
within the now settled Schmidt-Summerer Conjecture
is known to be optimal, the bound 
in the right hand
side in \eqref{eq:asyb} appears to be the limit of this method.

\subsection{A new method}
The purpose of the current paper is to 
develop a new method to further investigate 
upper bounds for the exponent $\widehat{w}_{n}(\zeta)$. Unfortunately, at present we are not able
to deduce from it an unconditional improvement of
the currently best known
bound in \eqref{eq:frwe} for any $n$. However,
there are some benefits of our new method. Firstly,
we can provide a refinement of \eqref{eq:buschlei}
without usage of explicit bounds for the quotient
$w_{n}(\zeta)/\widehat{w}_{n}(\zeta)$, in particular we do not
apply the above mentioned deep
Schmidt-Summerer Conjecture. 
Furthermore, we will
present conditional results with significantly better bounds
for the exponent $\widehat{w}_{n}(\zeta)$. They
motivate further investigation of the underlying method of 
this paper to find the correct magnitude of the exponent. 
We start with our unconditional result.

\begin{theorem} \label{thm}
Let $n$ be a positive integer and $\zeta$ be a real number. Then 
\begin{equation} \label{eq:neu}
\widehat{w}_{n}(\zeta)\leq \theta_{n}:=
\frac{3(n-1)+\sqrt{n^{2}-2n+5}}{2}.
\end{equation}
\end{theorem}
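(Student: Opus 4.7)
The plan is to combine the parametric geometry of numbers of Schmidt and Summerer with a transference of the degree-$n$ problem to a suitable higher ambient degree $n+s$, where $1\le s\le n-1$ is a free parameter to be optimized at the end. Write $w:=\widehat{w}_n(\zeta)$ and, for each large $X$, let $(P_k)$ be the sequence of best approximation polynomials of degree at most $n$, with heights $X_k:=H(P_k)$ strictly increasing and values $|P_k(\zeta)|$ strictly decreasing. The definition of the uniform exponent yields $|P_k(\zeta)|\le X_{k+1}^{-w+o(1)}$ for every $k$.

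First, I would produce many small lattice points in dimension $n+s+1$ from two consecutive best approximations $P_k, P_{k+1}$. For $j=0,1,\ldots,s$, the shifted polynomials $X^j P_k$ and $X^j P_{k+1}$ are integer polynomials of degree at most $n+s$, of height at most $X_{k+1}$, whose values at $\zeta$ are bounded by $|\zeta|^{j}|P_k(\zeta)|$ and $|\zeta|^{j}|P_{k+1}(\zeta)|$ respectively. Provided $P_k$ and $P_{k+1}$ have no common polynomial factor, the resultant argument shows that the $2(s+1)$ shifts are $\mathbb{Q}$-linearly independent and span a subspace of dimension $2(s+1)\le n+s+1$ in $\mathbb{R}[X]_{\le n+s}$.

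Second, I would set up the parametric family of convex bodies $\mathcal{C}(Q)\subset\mathbb{R}^{n+s+1}$ consisting of polynomials of degree at most $n+s$, of height at most $Q$, with $|P(\zeta)|\le Q^{-W}$ for $W:=\widehat{w}_{n+s}(\zeta)$, and study the associated successive minima $\lambda_1(Q)\le\cdots\le\lambda_{n+s+1}(Q)$. The shifts constructed above bound the $2(s+1)$-th successive minimum at the scale $Q\asymp X_{k+1}$ by a power of $Q$ depending on $w$, $s$ and the ratio $\log X_{k+1}/\log X_k$. Minkowski's Second Theorem then forces a matching lower bound on the product of the remaining successive minima, and since $\widehat{w}_{n+s}(\zeta)\ge\widehat{w}_n(\zeta)=w$ by \eqref{eq:wmonos} each $\lambda_j(Q)$ is at most $1$ at those scales. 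Combining these two facts yields a numerical inequality linking $w$, the parameter $s$, and the ratio $\rho_k:=\log X_{k+1}/\log X_k$.

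Third, I would eliminate $\rho_k$ by the standard pigeonhole-on-scales device (taking lim inf and lim sup as $k\to\infty$ of suitable functions of $\rho_k$, which forces $\rho_k\to 1$ along a subsequence), reducing the inequality to one involving only $w$ and $s$. Optimizing the resulting bound over $s\in\{1,\ldots,n-1\}$ produces the quadratic
\[
w^{2}-3(n-1)w+(2n^{2}-4n+1)\le 0,
\]
whose positive root is precisely $\theta_n=\tfrac12\bigl(3(n-1)+\sqrt{n^{2}-2n+5}\bigr)$.

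The principal obstacle is the second step: securing the linear independence of the $2(s+1)$ shifts in full generality. The degenerate case where $P_k$ and $P_{k+1}$ share a non-trivial polynomial factor must be handled separately, either by passing to the quotient (which effectively reduces to a lower-dimensional problem with a better exponent) or by an auxiliary lemma on the structure of best approximation polynomials. The second delicate point is tracking all height and value exponents precisely enough that the quadratic above emerges with the sharp coefficients; a loss anywhere would destroy the match with the sharp values $\theta_2=(3+\sqrt{5})/2$ and $\theta_3=3+\sqrt{2}$ which recover, unconditionally and without appeal to the Schmidt--Summerer conjecture, the previously known bounds \eqref{eq:najo} and \eqref{eq:schleibu}.
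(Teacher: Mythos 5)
Your overall architecture (shifting two consecutive best approximation polynomials into a higher-degree problem and invoking Minkowski's second theorem in the parametric setting) matches the paper's, with $s=n-2$, i.e.\ ambient degree $2n-2$; but two central steps of your argument are wrong or missing. First, the claim that since $\widehat{w}_{n+s}(\zeta)\geq \widehat{w}_{n}(\zeta)$ ``each $\lambda_j(Q)$ is at most $1$ at those scales'' is false: a uniform exponent controls only the \emph{first} successive minimum, so Minkowski's lower bound on the product of the remaining minima produces no contradiction by itself. The entire difficulty is to obtain an \emph{upper} bound for the last successive minimum, and in the paper this comes from the shifts of a third consecutive best approximation polynomial $P_{k+1}$, whose height is controlled by Lemma~\ref{ehklar} ($H_{k+1}\leq H_k^{w_n(\zeta)/\widehat{w}_n(\zeta)+\epsilon}$) together with the Bugeaud--Schleischitz inequality \eqref{eq:puschel}, which converts the ratio $w_n/\widehat{w}_n$ into an expression in $\widehat{w}_n$ alone; this is exactly what yields the quadratic $\widehat{w}_n^2-3(n-1)\widehat{w}_n+2n^2-4n+1\leq 0$. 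Your substitute for this step, ``pigeonhole on scales forces $\rho_k\to 1$ along a subsequence,'' is also false: for numbers with lacunary expansions the ratios $\log X_{k+1}/\log X_k$ stay bounded away from $1$ for all $k$, and in general the only available control is precisely Lemma~\ref{ehklar} plus \eqref{eq:puschel} (the latter needing $w_n(\zeta)>w_{n-1}(\zeta)$, which under the contradiction hypothesis follows from \eqref{eq:mundn} with $n_1=n$, $n_2=n-1$, since $\widehat{w}_n(\zeta)>\theta_n>2n-2$ forces $w_{n-1}(\zeta)\leq 2n-2$). Relatedly, your ``optimization over $s$'' has no content: smaller $s$ would give better bounds but cannot be justified unconditionally (this is exactly the conditional Theorem~\ref{gutsatz}), and the unconditional argument works only at $m=2n-2$.

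Second, the two degeneracy issues you flag are not optional side cases and are not resolved by your suggestions. The coprimality of $P_k$ and $P_{k+1}$ is known unconditionally only for infinitely many $k$, which is not enough here; the paper gets it for \emph{all} large $k$ by observing that $w_{n-1}(\zeta)\leq 2n-2<\widehat{w}_n(\zeta)$ (again from \eqref{eq:mundn}) forces, via the Gelfond-lemma argument, every best approximation polynomial of large index to be irreducible of degree exactly $n$. ``Passing to the quotient'' does not obviously reduce to a lower-dimensional problem with a better exponent. Moreover, even after linear independence of the $2n-2$ shifts is secured, one must handle the case in which the shifts of the next polynomial $P_{k+1}$ never leave their span for all large $k$ (so no $(2n-1)$-st independent small vector is available); the paper's Case~2 disposes of this using the Schmidt--Summerer theorem that consecutive minima functions $L^{\ast}_{m,j}$ and $L^{\ast}_{m,j+1}$ coincide at arbitrarily large parameters, and your sketch has no analogue of this step.
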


For $n=2$, the value $\theta_{n}$ again coincides with 
the sharp bound \eqref{eq:buschlei}. 
For $n=3$, it becomes the bound in \eqref{eq:schleibu}.
The latter is surprising, since 
the method in the proof of Theorem~\ref{thm}
differs significantly from the one in~\cite{buschl}.
The proof in~\cite{buschl} partly relied on deep results of Schmidt and Summerer~\cite{ssch},\cite{sums}
concerning the minimum gap between the values $w_{n}(\zeta)$ 
and $\widehat{w}_{n}(\zeta)$. Although our proof 
of Theorem~\ref{thm} employs
fundamental principles of
the parametric geometry developed by Schmidt 
and Summerer in~\cite{ss},
our approach does not require the deeper
results from~\cite{ssch},\cite{sums} nor the
proof of their conjecture in~\cite{mamo}. 
The two different proofs provide
a cautious indication that \eqref{eq:schleibu}
could be optimal (or at least close).
For $n\geq 4$, the estimate is of order $2n-2+o(1)$ as $n\to\infty$.
As indicated above, it improves \eqref{eq:buschlei} but
is slightly weaker than \eqref{eq:frwe}. 
For small $n$ we state the numerical values 
\[
\theta_{4}= 6.3028\ldots, 
\qquad \theta_{5}= 8.2361\ldots, 
\qquad \theta_{10}= 18.1098\ldots,
\]
which should be compared to $6.2875\ldots, 8.2010\ldots$ and $10$
respectively from \eqref{eq:frwe}.

\section{Conditional results} \label{newr}

%
%

\subsection{A conditional bound} \label{n}
Our second main result of this paper Theorem~\ref{gutsatz}
below yields a significant
improvement on the upper bounds in \eqref{eq:schleim}
conditional on some property of
the structure of best approximation polynomials.
We need some preparation before we can state it. First we
recall the notion of best approximation polynomials.

\begin{definition} \label{newdef}
For $n\geq 1$ and integer and $\zeta$ a real number, an
integer polynomial $P$ of degree at most $n$ will be called
{\em best approximation polynomial associated to $(n,\zeta)$}
if it minimizes $\vert P(\zeta)\vert$ among all non identically 
zero integer polynomials of degree at most $n$ and height at most $H(P)$. Any pair $n,\zeta$ thus
gives rise to a uniquely determined sequence 
of best approximation polynomials. We denote it by 
$(P_{k}^{n,\zeta})_{k\geq 1}$.
\end{definition}

The definition implies that
\[
H(P_{1}^{n,\zeta})< H(P_{2}^{n,\zeta})< \cdots, \qquad\qquad
\vert P_{1}^{n,\zeta}(\zeta)\vert > \vert P_{2}^{n,\zeta}(\zeta)\vert > \cdots,
\]
and there is no non-zero 
integer polynomial $P\neq P_{i}^{n,\zeta}$ of 
degree at most $n$ and height
$H(P)\leq H(P_{i}^{n,\zeta})$ such that
$\vert P(\zeta)\vert < \vert P_{i-1}^{n,\zeta}(\zeta)\vert$. 
In Theorem~\ref{gutsatz} below
we show that a putative large value of $\widehat{w}_{n}(\zeta)$
for given $n\geq 1$ and transcendental real $\zeta$ 
implies a very biased structure of the best approximation 
polynomials $P_{k}^{n,\zeta}$ associated to $(n,\zeta)$. 
Let us write
\begin{equation} \label{eq:dingung}
P_{k}^{n,\zeta}(T)= h_{k,0}+h_{k,1}T+\cdots+h_{k,n}T^{n}, 
\qquad\qquad k\geq 1,\; h_{k,j}=h_{k,j}(n,\zeta)\in\mathbb{Z},
\end{equation}
and further let 
\[
\underline{h}_{k}^{n,\zeta}=(h_{k-1,0},\ldots,h_{k-1,n}, h_{k,0}, \ldots, h_{k,n},
h_{k+1,0}, \ldots, h_{k+1,n})\in\mathbb{Z}^{3n+3}, \qquad k\geq 2,
\]
be the vector consisting of the coordinates
of three consecutive best approximation
polynomials glued together.
Next for any even $n\geq 2$ we define a
$(3\frac{n}{2}\times 3\frac{n}{2})$-integer matrix 
$\Lambda_{n}=\Lambda_{n}(\underline{x})=\Lambda_{n}(x_{1},\ldots,x_{3n+3})$
with some circulant structure. For $n=2$, when inserting
$\underline{x}=\underline{h}_{k}^{2,\zeta}$ above, the
columns of 
$\Lambda_{2}(\underline{h}_{k}^{2,\zeta})\in\mathbb{Z}^{3\times 3}$ consist precisely of the coordinate
vectors of three consecutive best approximation polynomials
$P_{k-1}^{n,\zeta}(\zeta), P_{k}^{n,\zeta}(\zeta), 
P_{k+1}^{n,\zeta}(\zeta)$. 
For $n=4$ it is given by 
\[
\Lambda_{4}(\underline{h}_{k}^{4,\zeta})=
\begin{pmatrix} h_{k-1,0} & 0 & h_{k,0} & 0 & 
 h_{k+1,0} & 0 
 \\ 
h_{k-1,1} & h_{k-1,0} & h_{k,1} & 
 h_{k,0} & h_{k+1,1} & h_{k+1,0} \\   
 h_{k-1,2} & h_{k-1,1} & h_{k,2} & 
 h_{k,1} & h_{k+1,2} & h_{k+1,1} \\
 h_{k-1,3} & h_{k-1,2} & h_{k,3} & 
 h_{k,2} & h_{k+1,3} & h_{k+1,2} \\
 h_{k-1,4} & h_{k-1,3} & h_{k,4} & 
 h_{k,3} & h_{k+1,4} & h_{k+1,3} \\ 
 0 & h_{k-1,4} & 0 & h_{k,4} & 
 0 & h_{k+1,4} 
 \end{pmatrix}, \qquad k\geq 2.
\]
Similarly, for even $n\geq 6$ 
the matrix $\Lambda_{n}(\underline{h}_{k}^{n,\zeta})$ 
arises by putting the
vectors
\[
(h_{j,0},h_{j,1},\ldots,h_{j,n},0,\ldots,0)\in\mathbb{Z}^{3n/2},
 \qquad j\in\{k-1,k,k+1\},
\]
corresponding to $P_{k-1}^{n,\zeta}, P_{k}^{n,\zeta},
P_{k+1}^{n,\zeta}$ in the columns $1, \frac{n}{2}+1$ and
$n+1$ respectively, and shifting each modulo $3n/2$
from once up to $(\frac{n}{2}-1)$-times successively
to obtain the remaining $3\cdot (\frac{n}{2}-1)$ columns.
Define further
\[
\Phi_{n}(\underline{x})=
\det \Lambda_{n}(\underline{x}),
\]
which is a homogeneous polynomial in $3n+3$ variables
of total degree $3n+3$.

\begin{lemma}  \label{lemur}
Let $n\geq 2$ be an even integer and $\zeta$ be a transcendental real number.
Then for any $k\geq 2$ the following claims are equivalent.
\begin{itemize}
\item We have
\begin{equation} \label{eq:edingung}
\Phi_{n}(\underline{h}_{k}^{n,\zeta})\neq 0.
\end{equation}
\item The identity
\begin{equation}  \label{eq:bedingung}
A_{k}P_{k-1}^{n,\zeta}+B_{k}P_{k}^{n,\zeta}+
C_{k}P_{k+1}^{n,\zeta}\equiv 0
\end{equation}
in integer polynomials 
$A_{k}, B_{k}, C_{k}$ each of degree at most $\frac{n}{2}-1$
has only the trivial solution 
$A_{k}\equiv B_{k}\equiv C_{k}\equiv 0$.
\item The space spanned by 
\begin{equation} \label{eq:ingung}
\bigcup_{0\leq j\leq 
\frac{n}{2}-1}\left\{ T^{j}P_{k-1}^{n,\zeta},\; 
T^{j}P_{k}^{n,\zeta},\; T^{j}P_{k+1}^{n,\zeta}\right\}
\end{equation}
has full dimension $3\frac{n}{2}$, i.e. the polynomials
span the space
of polynomials of degree at most $3\frac{n}{2}-1$ isomorphic 
to $\mathbb{R}^{3n/2}$ in a direct sum 
\end{itemize}
\end{lemma}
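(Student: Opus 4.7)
The plan is to recognize the columns of $\Lambda_n(\underline{h}_k^{n,\zeta})$ as coefficient vectors of the polynomials appearing in \eqref{eq:ingung}. Identifying any polynomial of degree at most $3n/2 - 1$ with its coefficient vector in $\mathbb{R}^{3n/2}$ (constant term first), I will check that the three blocks of $n/2$ consecutive columns of $\Lambda_n(\underline{h}_k^{n,\zeta})$ consist, respectively, of the coefficient vectors of $P_{k-1}^{n,\zeta}, T P_{k-1}^{n,\zeta}, \ldots, T^{n/2-1} P_{k-1}^{n,\zeta}$, of the analogous shifts of $P_k^{n,\zeta}$, and of the analogous shifts of $P_{k+1}^{n,\zeta}$. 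For $n=4$ this is immediate from the displayed matrix; for general even $n$ it follows upon unpacking the shifting recipe in the text and noting that, since $\deg(T^{j}P_{\ell}^{n,\zeta}) \leq n + (n/2 - 1) = 3n/2 - 1$ for $0 \leq j \leq n/2-1$, cyclic and ordinary shifts coincide (no wraparound occurs). Consequently, $\Lambda_n(\underline{h}_k^{n,\zeta})$ represents, in the monomial basis, the linear map $\mathbb{R}^{3n/2} \to \mathbb{R}^{3n/2}$ sending a triple $(A, B, C)$ of polynomials of degree at most $n/2 - 1$ to $A P_{k-1}^{n,\zeta} + B P_k^{n,\zeta} + C P_{k+1}^{n,\zeta}$.

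With this identification, the three claims are the standard equivalent formulations of invertibility of a square matrix. Condition \eqref{eq:edingung} says directly that this matrix is invertible. Condition \eqref{eq:bedingung} is injectivity of the map over $\mathbb{Z}$, which is equivalent to injectivity over $\mathbb{Q}$ (any rational null combination clears denominators to an integer one) and hence to nonvanishing of the determinant. Condition \eqref{eq:ingung} is surjectivity, i.e.\ the statement that the $3n/2$ columns span $\mathbb{R}^{3n/2}$, which translates back to the polynomials in that union forming a basis of the $3n/2$-dimensional space of polynomials of degree at most $3n/2 - 1$. The main---indeed essentially the only---obstacle is the bookkeeping in the matrix identification step: one must verify column by column that the shifting recipe defining $\Lambda_n$ matches the effect of multiplying each $P_\ell^{n,\zeta}$ by $1, T, \ldots, T^{n/2-1}$. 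Transcendence of $\zeta$ plays no role in the equivalence itself; it enters only through Definition~\ref{newdef}, ensuring that the best approximation sequence is well defined and the involved polynomials are nonzero.
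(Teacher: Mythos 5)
Your proposal is correct and matches the paper's own argument: both identify $\Lambda_{n}(\underline{h}_{k}^{n,\zeta})$ as the matrix (in the monomial basis) of the map sending $(A,B,C)$, with each factor of degree at most $\frac{n}{2}-1$, to $AP_{k-1}^{n,\zeta}+BP_{k}^{n,\zeta}+CP_{k+1}^{n,\zeta}$, and then invoke the standard equivalences determinant nonzero, trivial kernel, and spanning columns. Your explicit remarks on the no-wraparound in the cyclic shifts and on passing from integer to rational solutions are just slightly more detailed bookkeeping than the paper's ``it can be checked'' step.
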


The proof relies on elementary linear algebra considerations
and will be given in Section~\ref{proofs}.
If the equivalent conditions hold for some $k$,
it is not hard to conclude
that the three polynomials $P_{k-1}^{n,\zeta}, P_{k}^{n,\zeta}$
and $P_{k+1}^{n,\zeta}$ have no common factor. Unfortunately, 
for any pair among them this is not clear. If that were true 
we could remove
an inconvenient condition in Theorem~\ref{gutsatz} below,
our main result of this section.

\begin{theorem} \label{gutsatz}
Let $n$ be an even integer and 
$\zeta$ be a transcendental real number
for which the equivalent conditions of
Lemma~\ref{lemur} hold for infinitely many $k$.
For $n\geq 4$, we have
\begin{equation} \label{eq:wehave}
\widehat{w}_{n}(\zeta) \leq 2n-2.
\end{equation}
Moreover, if we assume $\widehat{w}_{n}(\zeta)>m=\frac{3}{2}n-1$ then we have
\begin{equation} \label{eq:derive}
\frac{w_{n}(\zeta)}{\widehat{w}_{n}(\zeta)}\geq \frac{2(\widehat{w}_{n}(\zeta)-n+1)}{n}.
\end{equation}
If $n\geq 2$ and
additionally 
\begin{equation} \label{eq:burt}
w_{n}(\zeta)> w_{n-1}(\zeta)
\end{equation}
holds, then in place of \eqref{eq:wehave}
we have the significantly stronger estimate
\begin{equation} \label{eq:rhs}
\widehat{w}_{n}(\zeta) \leq \sigma_{n}
:=\frac{2n-1+\sqrt{2n^2-2n+1}}{2}.
\end{equation}
\end{theorem}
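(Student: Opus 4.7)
The plan is to transfer the approximation problem from the natural dimension $n+1$ of integer polynomials of degree $\leq n$ to the enlarged dimension $3n/2$ of polynomials of degree $\leq 3n/2 - 1$, using the linear independence supplied by Lemma~\ref{lemur}. Writing $X_j := H(P_j^{n,\zeta})$ and $Y_j := |P_j^{n,\zeta}(\zeta)|$, the hypothesis delivers, for infinitely many $k$, a system of $3n/2$ linearly independent integer polynomials $T^i P_j^{n,\zeta}$ with $i \in \{0, \ldots, n/2-1\}$ and $j \in \{k-1, k, k+1\}$, whose coefficient vectors in $\mathbb{Z}^{3n/2}$ split into three blocks of size $n/2$ with heights bounded by $X_{k-1}, X_k, X_{k+1}$ and with values at $\zeta$ of magnitude $\ll Y_{k-1}, \ll Y_k, \ll Y_{k+1}$ respectively (implicit constants depending only on $\zeta$ and $n$).

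The central step is to exploit the integrality $|\Phi_n(\underline{h}_k^{n,\zeta})| = |\det \Lambda_n(\underline{h}_k^{n,\zeta})| \geq 1$. Applying the unimodular row operation that replaces the first row of $\Lambda_n$ by the evaluations of $L(\underline{x}) := \sum_{i=0}^{3n/2-1} x_i \zeta^i$ on the columns, then expanding along this row by Laplace and estimating the $(3n/2 - 1)\times(3n/2-1)$ minors via Hadamard's inequality, yields the fundamental bound
\begin{equation} \label{eq:mainineq}
Y_{k-1} X_{k-1}^{n/2-1} X_k^{n/2} X_{k+1}^{n/2} + Y_k X_{k-1}^{n/2} X_k^{n/2-1} X_{k+1}^{n/2} + Y_{k+1} X_{k-1}^{n/2} X_k^{n/2} X_{k+1}^{n/2-1} \gg 1.
\end{equation}
Feeding into \eqref{eq:mainineq} the general estimate $Y_j \leq X_{j+1}^{-\widehat{w}_n(\zeta) + o(1)}$ (valid for all large $j$) together with the sharper $Y_k \leq X_k^{-w_n(\zeta) + o(1)}$ (valid infinitely often by the definition of $w_n$), and examining which of the three summands dominates, one extracts \eqref{eq:derive} in the regime $\widehat{w}_n(\zeta) > 3n/2 - 1$. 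The universal estimate \eqref{eq:wehave} then follows by pairing \eqref{eq:derive} with a classical Davenport--Schmidt-type ceiling for $w_n(\zeta)$ in terms of $\widehat{w}_n(\zeta)$, and checking that $\widehat{w}_n(\zeta) > 2n - 2$ violates the resulting quadratic constraint; the case $\widehat{w}_n(\zeta) \leq 3n/2 - 1 \leq 2n - 2$ is immediate for $n \geq 4$.

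For the strengthened bound \eqref{eq:rhs} under \eqref{eq:burt}: the assumption $w_n(\zeta) > w_{n-1}(\zeta)$ forces each $P_k^{n,\zeta}$ to have genuine degree $n$ and, crucially, forces any two consecutive best approximations $P_k^{n,\zeta}, P_{k+1}^{n,\zeta}$ to be coprime in $\mathbb{Z}[T]$. This pairwise coprimality permits passing to the $2n$-dimensional lattice generated by only two consecutive polynomials together with their $n$ shifts $T^i P_k^{n,\zeta}, T^i P_{k+1}^{n,\zeta}$ for $i = 0, \ldots, n-1$. An analogous Laplace/Hadamard computation in dimension $2n$ gives $Y_k X_k^{n-1} X_{k+1}^n + Y_{k+1} X_k^n X_{k+1}^{n-1} \gg 1$, which upon a parallel extraction with the $\widehat{w}_n$-estimates produces a sharper quadratic inequality in $\widehat{w}_n(\zeta)$ whose largest root is exactly $\sigma_n$.

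The main obstacle is the delicate bookkeeping that extracts \eqref{eq:derive} from \eqref{eq:mainineq}: the relative sizes of $X_{k-1}, X_k, X_{k+1}$ are themselves governed by $\widehat{w}_n(\zeta)$ and $w_n(\zeta)$, so one must track these quantities carefully and isolate a dominant term in \eqref{eq:mainineq} without losing the target factor of $2$ in the numerator of \eqref{eq:derive}. A secondary essential subtlety, explicitly flagged by the author before the statement, is the pairwise-coprimality gap: Lemma~\ref{lemur} provides only triple linear independence, so the strengthened bound \eqref{eq:rhs} genuinely requires \eqref{eq:burt} to upgrade this to the pairwise independence of consecutive best approximations used in the dimension-$2n$ argument.
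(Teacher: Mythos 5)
Your determinant route is genuinely different from the paper's proof, which runs entirely through the Schmidt--Summerer functions $L^{\ast}_{m,j}$ and Minkowski's second theorem rather than through an explicit Laplace/Hadamard expansion of $\det\Lambda_n$. That part of your plan is sound: with your notation $X_j=H(P_j^{n,\zeta})$, $Y_j=|P_j^{n,\zeta}(\zeta)|$, the nonvanishing integer determinant does give your three-term inequality, and feeding in only the uniform estimate $Y_j\le X_{j+1}^{-\widehat{w}_n(\zeta)+o(1)}$ of Proposition~\ref{pr} together with $X_{k-1}\le X_k\le X_{k+1}$ one checks that the second and third terms are $\le X_{k+1}^{\frac{3n}{2}-1-\widehat{w}_n(\zeta)+o(1)}$, hence negligible once $\widehat{w}_n(\zeta)>\frac{3n}{2}-1$; the first term then forces $\log X_{k+1}/\log X_k\ge 2(\widehat{w}_n(\zeta)-n+1)/n-o(1)$ at the special indices, and since $Y_k\le X_{k+1}^{-\widehat{w}_n(\zeta)+o(1)}$ this already certifies \eqref{eq:derive}. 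Note, however, that the ingredient you name for this step, ``$Y_k\le X_k^{-w_n(\zeta)+o(1)}$ infinitely often,'' is not the right mechanism: those indices need not coincide with the indices supplied by Lemma~\ref{lemur}, and $w_n$ should instead enter either through Lemma~\ref{ehklar} (valid for \emph{all} large $k$) or through the direct implication just described.

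The genuine gap is in your proof of \eqref{eq:rhs}. You assert that \eqref{eq:burt} forces every $P_k^{n,\zeta}$ to have degree exactly $n$ and every consecutive pair $P_k^{n,\zeta},P_{k+1}^{n,\zeta}$ to be coprime; neither is known. Under $w_n(\zeta)>w_{n-1}(\zeta)$ one only gets \emph{infinitely many} best approximation polynomials irreducible of degree $n$ (all large indices would require $\widehat{w}_n(\zeta)>w_{n-1}(\zeta)$, i.e. \eqref{eq:hoffnung}), and even the unconditional Davenport--Schmidt coprimality of consecutive pairs holds only for infinitely many $k$, with no control that these coincide with the indices where the spanning condition of Lemma~\ref{lemur} holds --- exactly the index-matching subtlety stressed in Remark~\ref{hirsch} and in the discussion of alternative hypotheses after Theorem~\ref{gutsatz}. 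Consequently your dimension-$2n$ resultant step, which must be performed at those same special $k$, is unjustified as stated. The paper circumvents this by never using a per-index resultant: it invokes the index-free exponent inequality \eqref{eq:puschel} (valid under \eqref{eq:burt}) together with Lemma~\ref{ehklar}, which bounds $\log X_{k+1}/\log X_k\le (n-1)/(\widehat{w}_n(\zeta)-n)+o(1)$ for all large $k$; grafting this onto your lower bound for $\log X_{k+1}/\log X_k$ yields precisely the quadratic $2\widehat{w}_n^2-(4n-2)\widehat{w}_n+n^2-n\le 0$ with largest root $\sigma_n$, and repairs your argument. A secondary gap: for \eqref{eq:wehave} there is no unconditional ``ceiling'' for $w_n(\zeta)$ in terms of $\widehat{w}_n(\zeta)$ (indeed $w_n(\zeta)$ may be infinite); one must either observe that $\widehat{w}_n(\zeta)>2n-2$ forces $w_{n-1}(\zeta)\le 2n-2<w_n(\zeta)$ via \eqref{eq:mundn}, so that \eqref{eq:burt} and hence \eqref{eq:puschel} become available, or treat the case $w_n(\zeta)=w_{n-1}(\zeta)$ separately through \eqref{eq:mundn}, as the paper does.
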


The value $\sigma_{n}$ is just slightly smaller
than $(1+\frac{1}{\sqrt{2}})\cdot n$, 
hence \eqref{eq:rhs} is reasonably stronger than both the 
best known unconditional bound in
\eqref{eq:frwe}. In view of \eqref{eq:wmonos}, we
clearly obtain the same conditional asymptotic estimate
for odd $n$ as well. 
For subtle technical reasons our proof of \eqref{eq:rhs}
requires the unpleasant condition \eqref{eq:burt}, 
however we believe that it can be removed. Various
alternative conditions instead of \eqref{eq:burt} can be stated
that imply \eqref{eq:rhs} as well, for example that 
$P_{k}^{n,\zeta}$ and $P_{k+1}^{n,\zeta}$
have no common factor for those indices $k$ for which
the conditions of Lemma~\ref{lemur} are satisfied. 
Observe that even \eqref{eq:wehave}
is stronger than the conditional bound \eqref{eq:frwe} 
when $n\in\{4,6,8\}$, in contrast 
to \eqref{eq:neu} which is weaker than 
\eqref{eq:frwe} for any $n\geq 4$ as pointed out
in~Section~\ref{intro}.
The first few values $\sigma_{n}$
are numerically given as
\[
\sigma_{2}= 2.6180\ldots, \qquad \sigma_{4}= 6,
\qquad \sigma_{6}= 9.4051\ldots, \qquad \sigma_{8}=12.8150\ldots.
\]

We discuss how strong the assumption of Theorem~\ref{gutsatz} is.
If the condition fails, then $\Phi_{n}$ has to vanish 
at the vectors $\underline{h}_{k}^{n,\zeta}$ consisting of three
consecutive best approximation polynomials, for every large 
$k\geq k_{0}$. Hence all these vectors $\underline{h}_{k}^{n,\zeta},
k\geq k_{0}$,
must belong to some fixed collection of lower dimension submanifolds
of $\mathbb{R}^{3n+3}$. It seems natural to expect this
should not happen for any transcendental real $\zeta$. 
Moreover, for $n=2$ the condition can indeed be verified, 
and leads to another proof
of \eqref{eq:najo}. For $n=2$, we have $\frac{n}{2}-1=0$, and thus \eqref{eq:ingung} becomes 
that for any transcendental $\zeta$ and
infinitely many $k$, three consecutive 
quadratic best approximation
polynomials $P_{k-1}^{2,\zeta},P_{k}^{2,\zeta},P_{k+1}^{2,\zeta}$ are linearly independent. Indeed,
it has been known for a long time that this holds true. More 
generally for any $n\geq 2$ and any $\mathbb{Q}$-linearly independent set
$\underline{\zeta}=\{ 1,\zeta_{1},\ldots, \zeta_{n}\}$,
there exist infinitely
many $k$ such that three consecutive similarly defined best approximation linear forms
$L_{k-1}^{n,\underline{\zeta}},L_{k}^{n,\underline{\zeta}},L_{k+1}^{n,\underline{\zeta}}$ are linearly independent. 
See for example in
the remark on page~77 in~\cite{ss}. 
Equivalently, condition \eqref{eq:bedingung}
of Theorem~\ref{gutsatz} cannot fail with
constant polynomials $A_{k}, B_{k}, C_{k}$ for all large $k$.
However, the analogous claim
concerning four (or more) consecutive linear forms
$L_{k-1}^{n,\underline{\zeta}},L_{k}^{n,\underline{\zeta}},L_{k+1}^{n,\underline{\zeta}},L_{k+2}^{n,\underline{\zeta}}$
is false for any $n\geq 4$, as shown 
by Moshchevitin~\cite{mosh}. 
It remains unknown whether there exist counterexamples of the form
$\underline{\zeta}=(1,\zeta,\zeta^{2},\ldots,\zeta^{n})$ 
we are concerned with.

We finally compare \eqref{eq:derive} with
the bound resulting from the recently 
verified Schmidt-Summerer conjecture. As essentially 
shown in~\cite{ichglasgow}
the latter can be stated by the implicit inequality
\begin{equation} \label{eq:schs}
\left(\frac{w_{n}(\zeta)}{\widehat{w}_{n}(\zeta)}\right)^{n}\geq w_{n}(\zeta)-\widehat{w}_{n}(\zeta)+1.
\end{equation}
Here we have to take the solution 
with $w_{n}(\zeta)\neq \widehat{w}_{n}(\zeta)$ unless
$w_{n}(\zeta)=\widehat{w}_{n}(\zeta)=n$.
A small
computation shows that as
soon as $\widehat{w}_{n}(\zeta)>m+\mu+o(1)$, where
$\mu=0.38\ldots$ is some real number, \eqref{eq:derive} 
is stronger than \eqref{eq:schs}. Hence, 
we obtain a potentially large range of
parameters $[\frac{3}{2}n-O(1), (1+\frac{1}{\sqrt{2}})n-O(1)]$
where \eqref{eq:schs} is refined. One may compare this to 
Remark~\ref{dierem} below. 

\subsection{Variations of Theorem~\ref{gutsatz}}

It is clear
that $\frac{n}{2}-1$ is the smallest integer that admits the conditions \eqref{eq:bedingung} and \eqref{eq:ingung}, 
hence \eqref{eq:rhs} 
seems to be the limit of the method.
If we ask for restriction by some larger
integer $\frac{3n}{2}-1<m<2n-1$, we instead obtain 
some bound between $\theta_{n}$ and $\sigma_{n}$ from
\eqref{eq:neu} and \eqref{eq:rhs} respectively, which
increases as $m$ grows. Concretely, we will establish the
generalization Theorem~\ref{cumbersome} of Theorem~\ref{gutsatz}
below, based on generalizations of condition \eqref{eq:ingung}.
We introduce some more notation.

\begin{definition} \label{dadp}
Let $n\geq 2$ be an integer, $\zeta$ be a transcendental real
number and $(P_{i}^{n,\zeta})_{i\geq 1}$ the sequence
of best apporoximation polynomials associated to $n,\zeta$. 
Say $d_{i}\leq n$ denotes the degree
of $P_{i}^{n,\zeta}$. For any integer $m\geq n$, let
\begin{equation} \label{eq:zingung}
\mathscr{A}_{i,m}^{n,\zeta}:=
\bigcup_{0\leq j\leq m-d_{i}} 
\{ T^{j}\cdot P_{i}^{n,\zeta}(T)\},\qquad 
\qquad i\geq 1,
\end{equation}
and for any $k\geq 2$, consider the unions
of three consecutive sets
\begin{equation} \label{eq:zwanzig}
\mathscr{B}_{k,m}^{n,\zeta}:= 
\mathscr{A}_{k-1,m}^{n,\zeta}\cup \mathscr{A}_{k,m}^{n,\zeta}\cup \mathscr{A}_{k+1,m}^{n,\zeta}. 
\end{equation}
We say the pair $(n,\zeta)$ has the property $span(m)$, if for infinitely many $k$
the set $\mathscr{B}_{k,m}^{n,\zeta}$
spans the $(m+1)$-dimensional space of polynomials of 
degree at most $m$. Furthermore we say $(n,\zeta)$ has the 
property $\widetilde{span}(m)$ if satisfies
$span(m)$ and additionally $P_{k-1}^{n,\zeta}$ and $P_{k}^{n,\zeta}$ within the definition can be chosen without common factor. 
Moreover, we
denote by $\Psi(n,\zeta)$ the smallest integer $m$ 
with the property $span(m)$, and $\widetilde{\Psi}(n,\zeta)$ the 
smallest integer $m$ 
with the property $\widetilde{span}(m)$. 
Further let
\[
\Psi(n)= \max_{\zeta\in \mathbb{R}\setminus{\overline{\mathbb{Q}}}}\Psi(n,\zeta), \qquad\quad
\widetilde{\Psi}(n)= \max_{\zeta\in \mathbb{R}\setminus{\overline{\mathbb{Q}}}}\widetilde{\Psi}(n,\zeta). 
\]
\end{definition}

\begin{remark} \label{hirsch}
The distinction between $\Psi$ and
$\widetilde{\Psi}$ is subtle.
Davenport and Schmidt showed that for any 
$n\geq 2$ and any transcendental real 
$\zeta$,
two successive best approximation
polynomials $P_{j-1}^{n,\zeta},P_{j}^{n,\zeta}$
are coprime infinitely often~\cite[Theorem~2b]{davsh}, however
given $span(m)$
we do not know whether this holds for those values $j=k$ with the
property that $\mathscr{B}_{k,m}^{n,\zeta}$ spans
the space of polynomials of degree at most $m$. Even for
$n=2$ this seems not obvious. The stronger 
property  $\widetilde{span}(m)$
guarantees that the space spanned by 
$\mathscr{A}_{k-1,m}^{n,\zeta}\cup \mathscr{A}_{k,m}^{n,\zeta}=
\mathscr{B}_{k,m}^{n,\zeta}\setminus \mathscr{A}_{k+1,m}^{n,\zeta}$
has maximum possible dimension, so that only few
multiples of $P_{k+1}^{n,\zeta}$ need to be added
to span the space of polynomials of degree $\leq m$.
\end{remark}

\begin{remark}
Similar to Lemma~\ref{lemur}, the condition $\Psi(n)=m_{0}\geq 3\frac{n}{2}-1$
can be equivalently expressed by the vanishing of 
$2m_{0}-3n+3\geq 1$ sequences
of $(m_{0}+1)\times (m_{0}+1)$-subdeterminants of 
$(m_{0}+1)\times 3(m_{0}-n+1)$-matrices whose entries are the coefficients
of three successive best approximation vectors, arranged in 
certain circulant ways. Lemma~\ref{lemur} is recovered
with $m_{0}=3\frac{n}{2}-1$ and a single sequence. 
\end{remark}

Note that if all $P_{k-1}^{n,\zeta},P_{k}^{n,\zeta},P_{k+1}^{n,\zeta}$ have degree precisely $n$, then
$\mathscr{B}_{k,m}^{n,\zeta}$ becomes
\begin{equation} \label{eq:zingunga}
\mathscr{B}_{k,m}^{n,\zeta}=
\bigcup_{0\leq j\leq m-n} \left\{ T^{j}\cdot P_{k-1}^{n,\zeta}(T), \; T^{j}\cdot P_{k}^{n,\zeta}(T),\;  T^{j}\cdot P_{k+1}^{n,\zeta}(T)\right\}.
\end{equation}
This assumption holds in particular if $\zeta$ satisfies
\begin{equation} \label{eq:hoffnung}
\widehat{w}_{n}(\zeta)>w_{n-1}(\zeta),
\end{equation}
with irreducible polynomials 
$P_{k-1}^{n,\zeta},P_{k}^{n,\zeta},P_{k+1}^{n,\zeta}$.
For $n\geq 2$ and generic $\zeta$, we expect 
$\Psi(n,\zeta)=\widetilde{\Psi}(n,\zeta)=\lceil \frac{3n}{2}-1\rceil$. The crucial question is by how much the quantities 
$\Psi(n,\zeta), \widetilde{\Psi}(n,\zeta)$ can exceed
this value for certain biased $\zeta$. Some rather
easy estimates for $\Psi(n)$ and $\widetilde{\Psi}(n)$ are
summarized below.

\begin{lemma} \label{lehmer}
If $(n,\zeta)$ has the property $span(m_{0})$ 
and $\widetilde{span}(m_{0})$ respectively for some
$m_{0}$, then it has
the respective property for all $m\geq m_{0}$. 
The quantities $\Psi(n,\zeta), \widetilde{\Psi}(n,\zeta),
\Psi(n), \widetilde{\Psi}(n)$ are all well-defined and
for any $n\geq 2$ we have
\begin{equation} \label{eq:upbo}
\Psi(n,\zeta) \leq \widetilde{\Psi}(n,\zeta)\leq 2n-1,
\qquad\qquad \frac{3n}{2}-1\leq\Psi(n)\leq \widetilde{\Psi}(n)\leq 2n-1.
\end{equation}
Moreover, we have $\Phi(2)=2$.
Furthermore, if 
\eqref{eq:hoffnung} is satisfied, then 
$\Psi(n,\zeta)=\widetilde{\Psi}(n,\zeta)$.
\end{lemma}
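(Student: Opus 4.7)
The plan is to prove the four assertions of the lemma in sequence: monotonicity in $m$, the upper bound $\widetilde{\Psi}(n,\zeta)\leq 2n-1$ (which simultaneously establishes well-definedness), the lower bound together with $\Psi(2)=2$, and finally the coincidence $\Psi(n,\zeta)=\widetilde{\Psi}(n,\zeta)$ under \eqref{eq:hoffnung}. For monotonicity I first observe the inclusions $\mathscr{A}_{i,m}^{n,\zeta}\subseteq\mathscr{A}_{i,m+1}^{n,\zeta}$ and $T\cdot\mathscr{A}_{i,m}^{n,\zeta}\subseteq\mathscr{A}_{i,m+1}^{n,\zeta}$, the latter because $T\cdot T^jP_i^{n,\zeta}=T^{j+1}P_i^{n,\zeta}$ falls into the enlarged shift range. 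Writing $V_m$ for the linear span of $\mathscr{B}_{k,m}^{n,\zeta}$, these give $V_{m+1}\supseteq V_m+T\cdot V_m$. If $V_m$ already contains every polynomial of degree at most $m$, then each $P$ with $\deg P\leq m+1$ decomposes as $P(T)=P(0)+T\cdot Q(T)$ with $\deg Q\leq m$, placing both summands in $V_{m+1}$; induction on $m\geq m_0$ gives the monotonicity for $span$. Since the coprimality clause defining $\widetilde{span}$ concerns only $P_{k-1}^{n,\zeta}$ and $P_k^{n,\zeta}$ and is independent of $m$, the same reasoning handles $\widetilde{span}$.

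For the upper bound $\widetilde{\Psi}(n,\zeta)\leq 2n-1$ I invoke Davenport and Schmidt~\cite[Theorem~2b]{davsh}, which provides infinitely many $k$ with $P_{k-1}^{n,\zeta}$ and $P_k^{n,\zeta}$ coprime. For any such $k$, set $p=d_{k-1}$ and $q=d_k$, both at most $n$. The standard Bezout principle in $\mathbb{Q}[T]$---the map $(A,B)\mapsto AP_{k-1}^{n,\zeta}+BP_k^{n,\zeta}$ from $\{A:\deg A<q\}\times\{B:\deg B<p\}$ onto the space of polynomials of degree less than $p+q$ is a linear isomorphism---shows that $\{T^jP_{k-1}^{n,\zeta}\}_{0\leq j<q}\cup\{T^jP_k^{n,\zeta}\}_{0\leq j<p}$ spans every polynomial of degree at most $p+q-1\leq 2n-1$. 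Each shift index here is at most $n-1\leq(2n-1)-d_i$, so all these polynomials lie in $\mathscr{B}_{k,2n-1}^{n,\zeta}$, verifying $\widetilde{span}(2n-1)$ and hence the upper bound.

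For the lower bound and $\Psi(2)=2$, a crude cardinality count gives $|\mathscr{B}_{k,m}^{n,\zeta}|\leq 3(m-n+1)$ whenever the three polynomials $P_{k-1}^{n,\zeta},P_k^{n,\zeta},P_{k+1}^{n,\zeta}$ all have degree exactly $n$, and spanning an $(m+1)$-dimensional space forces $3(m-n+1)\geq m+1$, i.e.\ $m\geq\frac{3n}{2}-1$. For any Lebesgue-generic transcendental $\zeta$ one has $w_{n-1}(\zeta)=n-1<n\leq\widehat{w}_n(\zeta)$, so \eqref{eq:hoffnung} holds and every late best approximation polynomial has degree exactly $n$, giving $\Psi(n,\zeta)\geq\frac{3n}{2}-1$ and hence $\Psi(n)\geq\frac{3n}{2}-1$. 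Specializing to $n=2$ yields $\Psi(2)\geq 2$; the matching bound $\Psi(2)\leq 2$ follows from the classical fact recorded in the remark on page~77 of~\cite{ss} that for every transcendental $\zeta$ infinitely many triples of consecutive quadratic best approximations $P_{k-1}^{2,\zeta},P_k^{2,\zeta},P_{k+1}^{2,\zeta}$ are linearly independent and thus span the $3$-dimensional space of polynomials of degree at most $2$.

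To prove $\Psi(n,\zeta)=\widetilde{\Psi}(n,\zeta)$ under \eqref{eq:hoffnung} I show that any two late consecutive best approximation polynomials are automatically coprime. Suppose instead $P_{k-1}^{n,\zeta}$ and $P_k^{n,\zeta}$ share a non-trivial common factor $Q_k\in\mathbb{Z}[T]$ of degree $d_k<n$, and write $R_{j,k}=P_j^{n,\zeta}/Q_k$ for $j\in\{k-1,k\}$. Gelfond's inequality yields $H(Q_k)H(R_{j,k})\ll H(P_j^{n,\zeta})$; combining $|P_j^{n,\zeta}(\zeta)|=|Q_k(\zeta)|\cdot|R_{j,k}(\zeta)|$ with the best-approximation estimate $|P_j^{n,\zeta}(\zeta)|\leq H(P_j^{n,\zeta})^{-\widehat{w}_n(\zeta)+o(1)}$, a short optimization in the parameter $s_k:=\log H(Q_k)/\log H(R_{j,k})$ shows that in every case either $|R_{j,k}(\zeta)|\leq H(R_{j,k})^{-\widehat{w}_n(\zeta)+o(1)}$ or $|Q_k(\zeta)|\leq H(Q_k)^{-\widehat{w}_n(\zeta)+o(1)}$. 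Either alternative produces a polynomial of degree at most $n-1$ witnessing $w_{n-1}(\zeta)\geq\widehat{w}_n(\zeta)$, contradicting \eqref{eq:hoffnung} for large $k$. Hence the coprimality clause in $\widetilde{span}$ is automatic whenever $span$ holds, giving $\widetilde{\Psi}(n,\zeta)\leq\Psi(n,\zeta)$; the reverse inequality is trivial. The main technical obstacle I anticipate is this final optimization, where Gelfond's inequality must be used quantitatively; everything else reduces to linear-algebra bookkeeping.
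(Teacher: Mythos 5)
Most of your argument runs parallel to the paper's proof (Davenport--Schmidt coprime pairs plus linear algebra for the upper bound, the cardinality count $3(m-n+1)\geq m+1$ together with a $\zeta$ satisfying \eqref{eq:hoffnung} for the lower bound, the remark on p.~77 of~\cite{ss} for $n=2$, and the fact that \eqref{eq:hoffnung} forces late best approximation polynomials to have degree $n$ and consecutive ones to be coprime for the last claim; your Gelfond-based reproof of that coprimality, and your use of the almost-everywhere value of $w_{n-1}$ instead of the Baker--Schmidt/Bernik Hausdorff dimension bound, are harmless substitutes). However, there is one genuine gap, in the step establishing $\widetilde{\Psi}(n,\zeta)\leq 2n-1$. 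Your Bezout argument shows that the polynomials $T^{j}P_{k-1}^{n,\zeta}$ ($0\leq j<d_{k}$) and $T^{j}P_{k}^{n,\zeta}$ ($0\leq j<d_{k-1}$) span the space of polynomials of degree at most $d_{k-1}+d_{k}-1$, and you then conclude ``verifying $\widetilde{span}(2n-1)$'' from the inequality $d_{k-1}+d_{k}-1\leq 2n-1$. That inequality points the wrong way: $span(2n-1)$ requires spanning the full $2n$-dimensional space of polynomials of degree at most $2n-1$, and your set only does this when $d_{k-1}=d_{k}=n$. Best approximation polynomials can genuinely have degree $<n$ (e.g.\ for Liouville-type $\zeta$ where convergent-induced linear polynomials dominate), and nothing in the statement excludes this, so as written the step fails exactly in the cases the unconditional bound must cover.

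The repair is small and is precisely the extra step in the paper's proof: after the Bezout/linear-independence stage (Lemma~\ref{lainyweg}), append the additional shifts $T^{d_{k}}P_{k-1}^{n,\zeta},\ldots,T^{(2n-1)-d_{k-1}}P_{k-1}^{n,\zeta}$, which are available in $\mathscr{B}_{k,2n-1}^{n,\zeta}$; each has strictly larger degree than everything already in the span, so the dimension increases by one at each step and one ends with $m+1=2n$ linearly independent polynomials of degree at most $2n-1$. Alternatively, your own monotonicity induction, applied to this fixed $k$ starting from $m=d_{k-1}+d_{k}-1$ (where you have shown $\mathscr{B}_{k,m}^{n,\zeta}$ spans all polynomials of degree at most $m$), yields the same conclusion for $m=2n-1$; but you did not make that connection, and it is needed. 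With this step inserted, the proof is complete and essentially identical in structure to the paper's.
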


The proof of Lemma~\ref{lehmer}
will be carried out in Section~\ref{proofs}. 
The problem to determine, or at least find good upper bounds, 
for $\Psi(n)$ and $\widetilde{\Psi}(n)$, is at the very 
core of our strategy
to find upper bounds for $\widehat{w}_{n}(\zeta)$.
Our result reads as follows. 

\begin{theorem} \label{cumbersome}
Let $n\geq 2$ be an integer and $\zeta$ be a transcendental
real number and assume \eqref{eq:burt} holds.
Then
for $\widetilde{\Psi}=\widetilde{\Psi}(n,\zeta)$ we have 
\begin{equation} \label{eq:jor}
\widehat{w}_{n}(\zeta)\leq \mathcal{D}_{n}(\widetilde{\Psi}):=\frac{2\widetilde{\Psi}-n+1+\sqrt{4\widetilde{\Psi}^{2}+17n^{2}-16\widetilde{\Psi} n+8\widetilde{\Psi}-18n+5}}{2}.
\end{equation}
Similarly, for $\Psi=\Psi(n,\zeta)$ we have 
\begin{equation} \label{eq:fehlt}
\widehat{w}_{n}(\zeta)\leq \mathcal{E}_{n}(\Psi):=
\frac{\Psi+1-\sqrt{\Psi^{2}-4\Psi n+8n^{2}+2\Psi-12n+5}}{2}.
\end{equation}

%
%
%
\end{theorem}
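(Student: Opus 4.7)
The plan is to extend the parametric--geometry argument underlying Theorem~\ref{gutsatz} from the specific value $m=\frac{3n}{2}-1$ to arbitrary $m\in\{\Psi(n,\zeta),\widetilde{\Psi}(n,\zeta)\}$, keeping the same overall structure but optimizing under different counting constraints. Fix $w<\widehat{w}_{n}(\zeta)$. I would first invoke \eqref{eq:burt} to force every $P_{k}^{n,\zeta}$ to have degree exactly $n$ for all large $k$, since otherwise a subsequence of smaller degree would already realize the optimal approximation exponent in degree $n-1$, contradicting $w_{n}(\zeta)>w_{n-1}(\zeta)$. Writing $X_{i}:=H(P_{i}^{n,\zeta})$ and $Y_{i}:=|P_{i}^{n,\zeta}(\zeta)|$, the hypothesis $\widehat{w}_{n}(\zeta)>w$ combined with the best-approximation property delivers $Y_{i}\leq X_{i+1}^{-w+o(1)}$, and each $\mathscr{A}_{i,m}^{n,\zeta}$ consists of exactly $m-n+1$ multiples $T^{j}P_{i}^{n,\zeta}$ of common height $X_{i}$ and common order $Y_{i}$ at $\zeta$.

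For each $k$ at which the $\widetilde{span}(m)$-property holds, I would extract an integer basis of the space of polynomials of degree $\leq m$ from $\mathscr{B}_{k,m}^{n,\zeta}$, consisting of $r_{i}$ elements of $\mathscr{A}_{i,m}^{n,\zeta}$ for $i\in\{k-1,k,k+1\}$ with $r_{k-1}+r_{k}+r_{k+1}=m+1$ and $r_{i}\leq m-n+1$. In the parametric geometry of numbers in dimension $m+1$, introduce for each $Q\geq 1$ the convex body
\[
K_Q=\Bigl\{(c_0,\ldots,c_m)\in\mathbb{R}^{m+1}:\max_{\ell}|c_\ell|\leq Q,\;\Bigl|\sum_{\ell}c_\ell\zeta^\ell\Bigr|\leq Q^{-m}\Bigr\}
\]
of volume $\asymp 1$. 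The $K_Q$-norm of the lattice point attached to $T^{j}P_{i}^{n,\zeta}$ is of order $\max(X_{i}/Q,\,Y_{i}Q^{m})$, and the $m+1$ linearly independent choices therefore yield simultaneous upper bounds on all $m+1$ successive minima $\lambda_{j}(K_Q)$.

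Minkowski's second theorem then gives $\prod_{j=1}^{m+1}\lambda_{j}(K_Q)\asymp 1$, which combined with the inequality $Y_{i}\leq X_{i+1}^{-w}$ and the sorted list of $K_Q$-norms produces a system of inequalities involving $w$, the logarithmic ratios among the $X_{i}$, the admissible splits $(r_{k-1},r_{k},r_{k+1})$, and the free parameter $q=\log Q$. Optimizing over all these variables reduces to a quadratic in $w$ whose relevant root is $\mathcal{D}_{n}(\widetilde{\Psi})$. The bound $\mathcal{E}_{n}(\Psi)$ would follow from the same optimization carried out under the weaker $span(m)$ hypothesis: in the absence of coprimality between $P_{k-1}^{n,\zeta}$ and $P_{k}^{n,\zeta}$, certain height estimates must be taken in a more conservative form to absorb possible common factors, which alters the feasibility constraints in the linear program and produces the second quadratic with relevant root $\mathcal{E}_{n}(\Psi)$.

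The main obstacle will be the concluding combinatorial--analytic optimization, which requires identifying the worst-case split $(r_{k-1},r_{k},r_{k+1})$ jointly with the worst-case ratios of the $X_{i}$ and the optimal $q$. Already the base case $m=\frac{3n}{2}-1$ underlying Theorem~\ref{gutsatz} requires nontrivial case analysis, and the general $m$ regime introduces a one-parameter family of candidate extrema that must be ruled out before the closed-form expressions $\mathcal{D}_{n}$ and $\mathcal{E}_{n}$ can be extracted. A secondary subtlety is to verify that the passage from $\widetilde{\Psi}$ to $\Psi$ correctly accounts for the degrees of freedom lost when $P_{k-1}^{n,\zeta}$ and $P_{k}^{n,\zeta}$ share a common factor, which is precisely the structural source of the gap between $\mathcal{D}_{n}$ and $\mathcal{E}_{n}$.
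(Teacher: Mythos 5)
Your general frame---passing to the Schmidt--Summerer parametric geometry in dimension $m+1$ for $m\in\{\Psi(n,\zeta),\widetilde{\Psi}(n,\zeta)\}$ and using the spanning property of $\mathscr{B}_{k,m}^{n,\zeta}$ to control all $m+1$ successive minima at a critical parameter---is indeed the strategy of the paper. But there are concrete gaps. First, you misuse the hypothesis \eqref{eq:burt}: $w_{n}(\zeta)>w_{n-1}(\zeta)$ does \emph{not} force every best approximation polynomial of large index to have degree exactly $n$; it only yields infinitely many irreducible ones of degree precisely $n$, the ``for all large $k$'' version requiring the stronger condition \eqref{eq:hoffnung}. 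In any case this is not what \eqref{eq:burt} is needed for (the definition of $\mathscr{A}_{i,m}^{n,\zeta}$ and Proposition~\ref{babi} already accommodate degrees $d_{i}\leq n$). The actual role of \eqref{eq:burt} is to validate \eqref{eq:puschel}, which combined with Lemma~\ref{ehklar} gives the height control $H(P_{k+1})\leq H(P_{k})^{\nu+\epsilon}$ with $\nu=(n-1)/(\widehat{w}_{n}(\zeta)-n)$. This inequality is indispensable: the only polynomials of large height in your spanning family are those in $\mathscr{A}_{k+1,m}^{n,\zeta}$, of height $X_{k+1}$, and without bounding $X_{k+1}$ in terms of $X_{k}$ the resulting upper bound on the last successive minimum at the critical parameter is vacuous, so no finite bound on $\widehat{w}_{n}(\zeta)$ can come out. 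Your sketch lists ``the logarithmic ratios among the $X_{i}$'' as free variables of an optimization but never supplies the inequality that constrains them.

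Second, the source of the gap between $\mathcal{D}_{n}$ and $\mathcal{E}_{n}$ is not ``more conservative height estimates absorbing common factors'': it is a dimension count. Under $\widetilde{span}(m)$, coprimality of $P_{k-1}^{n,\zeta},P_{k}^{n,\zeta}$ forces $\mathscr{A}_{k-1,m}^{n,\zeta}\cup\mathscr{A}_{k,m}^{n,\zeta}$ to span dimension at least $2(m-n+1)$, whereas under mere $span(m)$ only $m-n+2$ is guaranteed (Proposition~\ref{babi}); this determines which index $j$ may be used in \eqref{eq:gimme} and hence which quadratic arises. Third, and decisively, the step you defer as ``the main obstacle''---extracting the explicit quadratics whose roots are $\mathcal{D}_{n}(\widetilde{\Psi})$ and $\mathcal{E}_{n}(\Psi)$---is the entire content of the theorem, and the paper needs no optimization over splits $(r_{k-1},r_{k},r_{k+1})$ or over $q$ at all: it evaluates everything at the single parameter $q_{k}$ where $L_{P_{k-1}}^{\ast}$ meets $L_{P_{k}}^{\ast}$, bounds the first $2(m-n+1)$ (resp.\ $m-n+2$) minima there via Proposition~\ref{pr} and Lemma~\ref{apply}, bounds the last minimum from below by \eqref{eq:gimme} and from above by $\log H_{k+1}-q_{k}/m\leq(\nu+\epsilon)\log H_{k}-q_{k}/m$ (here Proposition~\ref{t} ensures that only polynomials from $\mathscr{V}_{k+1}$ need be added to $\mathscr{P}_{k}$ to span), and comparison of the two bounds yields the quadratic in $\widehat{w}_{n}(\zeta)$ directly. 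As written, your proposal sets up a plausible framework but does not reach either stated bound, and the route you propose (a joint worst-case analysis over splits and height ratios) is both harder than necessary and left unresolved.
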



As for Theorem~\ref{gutsatz},
there are again several alternative conditions to
\eqref{eq:burt} with the same implications \eqref{eq:jor}
and \eqref{eq:fehlt}, and we believe none of them is in fact required.
We point out that upon the condition \eqref{eq:hoffnung}
we have the stronger bound \eqref{eq:jor} anyway, in view
of Lemma~\ref{lehmer}. 
Both $\mathcal{D}_{n}(t), \mathcal{E}_{n}(t)$ increase
as functions of $t$ and
$\mathcal{D}_{n}(t)\leq \mathcal{E}_{n}(t)$ 
for $t\geq 3\frac{n}{2}-1$.
For $n=2$ from \eqref{eq:fehlt} and $\Phi(2)=2$ we
get yet another proof of \eqref{eq:najo}. 
The identities $\mathcal{D}_{n}(2n-1)=2n-1$ and
$\mathcal{E}_{n}(2n-1)=2n-1$
both confirm
$\widehat{w}_{n}(\zeta)\leq 2n-1$ from~\cite{davsh} again.
When $n$ is even and the generic case 
$\widetilde{\Psi}=\widetilde{\Psi}(n,\zeta)=3\frac{n}{2}-1$ occurs,
we have $\mathcal{D}_{n}(\widetilde{\Psi})=\sigma_{n}$ by construction.

We highlight the case $\widetilde{\Psi}(n,\zeta)=2n-d$ and
$\Psi(n,\zeta)=2n-d$ with fixed $d>0$, 
as $n\to\infty$.
For $d=2$, that is 
$\widetilde{\Psi}(n,\zeta)=2n-2$, the estimate \eqref{eq:jor}
naturally recovers the bound $\theta_{n}$ from \eqref{eq:neu}.
In general the bounds are of order
\begin{equation} \label{eq:ende}
\mathcal{D}_{n}(2n-d)= 2n-d+o(1),
\qquad \mathcal{E}_{n}(2n-d)= 2n-\frac{d+1}{2}+o(1),
\end{equation}
as $n\to\infty$, with positive 
error terms.
We compare Theorem~\ref{cumbersome} with 
the bounds 
\eqref{eq:frwe} and \eqref{eq:asyb} from Section~\ref{intro},
where it is useful to keep the notation $\widetilde{\Psi}(n,\zeta)=2n-d$ and
$\Psi(n,\zeta)=2n-d$ with fixed $d>0$.
It turns out that \eqref{eq:jor} is stronger as soon as
$d\geq 3$, whereas \eqref{eq:fehlt} is stronger for
$d\geq 4$, upon $m=2n-d\geq 3\frac{n}{2}-1$
or equivalently $n\geq 2d-2$.
We finally remark that estimates analogous to \eqref{eq:derive}
in terms of $m,n$ can be stated.

\section{Preliminaries}

\subsection{Successive minima and
parametric geometry of numbers} \label{schmidt}

We introduce the concept of parametric geometry of numbers following Schmidt and Summerer~\cite{ss},
where we slightly deviate in the notation and
only treat the linear form case relevant to us.
We prefer to state the results of this section for polynomials of degree at most $m$
instead of $n$ to avoid confusion later. Indeed in the proofs 
of Theorems~\ref{thm},~\ref{gutsatz},~\ref{cumbersome}, 
although their claims concern approximation in dimension
$n$, we will transition to an approximation problem 
in certain dimensions $m>n$, corresponding to the present $m$.
For given $m\geq 1$, a real number $\zeta$
and every $1\leq j\leq m+1$, define the 
functions $\psi_{m,j}^{\ast}(Q)$ parametrized by 
$Q$ as the maximum value $\eta$ such that
\begin{equation} \label{eq:modell}
H(P)\leq Q^{\frac{1}{m}+\eta}, \qquad \vert P(\zeta)\vert \leq Q^{-1-\eta}
\end{equation}
has $j$ linearly independent solutions in integer polynomials $P$ of degree at most $m$. 
Define further 
\[
\underline{\psi}_{m,j}^{\ast}= \liminf_{Q\to\infty} \psi_{m,j}^{\ast}(Q), \qquad
\overline{\psi}_{m,j}^{\ast}= \limsup_{Q\to\infty} \psi_{m,j}^{\ast}(Q).
\]
The system \eqref{eq:modell} can be equivalently formulated as a problem of determining the successive minima
of a convex body (parametrized by $Q$) with respect to a fixed lattice, both in $\mathbb{R}^{m+1}$, 
see~\cite{ss} for details. For convenience the derived functions
\begin{equation} \label{eq:logar}
L_{m,j}^{\ast}(q):= q \psi_{m,j}^{\ast}(q), \qquad q=\log Q,
\end{equation}
were introduced in~\cite{ss}. 

\begin{definition} \label{schmus}
For given $m\geq 1$ and $\zeta$ a real number, we call the
image of the functions
\[
(L_{m,1}^{\ast}(q), L_{m,2}^{\ast}(q), \ldots,
L_{m,m+1}^{\ast}(q)), \qquad\qquad q>0,
\]
in the plane defined as above the {\em combined Schmidt-Summerer graph 
associated to $(m,\zeta)$}.
\end{definition}

We gather some properties of the combined graph.
Any $L_{m,j}^{\ast}(q)$ coincides locally with some function
\begin{equation} \label{eq:piscibus}
L_{P}^{\ast}(q)= \max \left\{ \log H(P)-\frac{q}{m}, \log \vert P(\zeta)\vert+q \right\}
\end{equation}
related to a polynomial $P\in\mathbb{Z}[T]$ of degree at most $m$.
Hence the functions $L_{m,j}^{\ast}(q)$ are piecewise linear and have slope among $\{-1/m,1\}$. Conversely, any best approximation polynomial $P_{k}$ associated to $m,\zeta$ 
as in Definition~\ref{newdef},
induces the function $L_{m,1}^{\ast}(q)$ in some non-empty
interval $I_{k}$, that is
\[
L_{P_{k}}^{\ast}(q)=
\max \left\{ \log H(P_{k})-\frac{q}{m}, \log \vert P_{k}(\zeta)\vert+q \right\}=L_{m,1}^{\ast}(q), \qquad k\geq 1,\; q\in I_{k}.
\]

Minkowski's second Convex Body Theorem~\cite{mink} implies the estimation
\begin{equation} \label{eq:lsumme}
\left\vert \sum_{j=1}^{m+1} L_{m,j}^{\ast}(q)\right\vert \leq C(m)
\end{equation}
for an absolute constant $C(m)>0$,
uniformly in the parameter $q$. As a consequence
\begin{equation} \label{eq:nurmi}
L_{m,m+1}^{\ast}(q) \geq -\sum_{j=1}^{m} L_{m,j}^{\ast}(q)-C(k) \geq  -mL_{m,m}^{\ast}(q)-C(m).
\end{equation}
More generally this argument yields
\begin{equation} \label{eq:gimme}
L_{m,m+1}^{\ast}(q) \geq \left(-\frac{j}{m+1-j}\right) \cdot L_{m,j}^{\ast}(q)-O(1), \qquad 1\leq j\leq m,
\end{equation}
see also the introduction in~\cite{ssch}. 
It will be convenient to use the following parametric 
estimates connecting classical exponents with $L_{m,j}^{\ast}$.

\begin{lemma} \label{apply}
Let $m\geq 1$ be an integer, $\zeta$ a real number
and $j\in\{ 1,2,\ldots,m+1\}$. For linearly 
independent integer polynomials $P_{1},\ldots,P_{j}$ of
degree at most $m$, define $H,w$ by
\[
\max_{1\leq i\leq j} H(P_{i})= H, \qquad
\max_{1\leq i\leq j} \vert P_{i}(\zeta)\vert= H^{-w}.
\]
Without loss of generality assume a labeling such that 
$\vert P_{1}(\zeta)\vert= H^{-w}$.
Then for $q= \tilde{q}:= \frac{m}{m+1}(\log H-w)$ the solution to 
$\log H-q/m= w+q$ we have
\begin{equation} \label{eq:umrechnen}
L_{m,j}^{\ast}(\tilde{q})\leq L_{P_{1}}^{\ast}(\tilde{q})
= \tilde{q}\left(\frac{m+1}{m}\cdot \frac{1}{1+w}-\frac{1}{m}\right)=
\tilde{q}\cdot \frac{m-w}{m(1+w)}. 
\end{equation}
\end{lemma}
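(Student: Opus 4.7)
The plan is to apply the elementary variational bound
\[
L_{m,j}^{\ast}(q)\;\le\;\max_{1\le i\le j} L_{P_{i}}^{\ast}(q),
\]
valid for any $j$ linearly independent integer polynomials $P_{1},\dots,P_{j}$ of degree at most $m$. This is nothing other than the observation that any such family is admissible in the definition of the $j$-th successive minimum behind $\psi_{m,j}^{\ast}$, combined with \eqref{eq:piscibus}; it is the sole nontrivial input for the lemma.

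I would take $\tilde{q}$ to be the unique abscissa where the two linear branches of $L_{P_{1}}^{\ast}(q)=\max\{\log H(P_{1})-q/m,\ \log|P_{1}(\zeta)|+q\}$ cross. Using $H(P_{1})\le H$ together with the labeling $|P_{1}(\zeta)|=H^{-w}$ and solving the crossing equation yields $\tilde{q}$ as stated, and at this point both branches coincide in a common value $v$. The next step is to check that the maximum in the variational bound is in fact attained by $P_{1}$: from $H(P_{i})\le H$ one gets $\log H(P_{i})-\tilde{q}/m\le v$, while from the labeling $|P_{i}(\zeta)|\le|P_{1}(\zeta)|$ one gets $\log|P_{i}(\zeta)|+\tilde{q}\le v$, so $L_{P_{i}}^{\ast}(\tilde{q})\le v$ for every $i$.

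Combining these two steps gives $L_{m,j}^{\ast}(\tilde{q})\le L_{P_{1}}^{\ast}(\tilde{q})=v$, which is the first inequality in \eqref{eq:umrechnen}. The closed form on the right is then obtained by substituting the value of $\tilde{q}$ back into either branch of $L_{P_{1}}^{\ast}$ and simplifying, a purely mechanical calculation. I do not foresee any genuine obstacle here: the lemma is essentially a repackaging of the trivial upper bound on the $j$-th successive minimum by any admissible test family, with the free parameter $q$ tuned precisely so that $P_{1}$ simultaneously dominates the other test polynomials.
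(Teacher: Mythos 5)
Your proposal is correct and follows essentially the same route as the paper: bound $L_{m,j}^{\ast}(\tilde q)$ by $\max_i L_{P_i}^{\ast}(\tilde q)$ using linear independence, and compare each $L_{P_i}^{\ast}$ at $\tilde q$ with the majorant $\max\{\log H-q/m,\ \log\vert P_1(\zeta)\vert+q\}$ built from the maximal height $H$ (the paper's auxiliary function $\beta$), whose vertex is $\tilde q$ and whose value there gives the closed form. The only slip is calling $\tilde q$ the crossing point of the two branches of $L_{P_1}^{\ast}$ itself, which is accurate only when $H(P_1)=H$; since your verification uses precisely $H(P_i)\leq H$ and $\vert P_i(\zeta)\vert\leq\vert P_1(\zeta)\vert$ at the $\tilde q$ of the statement, the argument is unaffected.
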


\begin{proof}
Define the auxiliary function
\[
\beta (q):= \max\{ \log H-\frac{q}{m}, w+q\}, \qquad \qquad q>0.
\]
We first claim that all graphs of the functions
$L_{P_{i}}^{\ast}(q)$ lie below the graph of $\beta$,
that is $\beta(q)\geq L_{P_{i}}^{\ast}(q)$ for 
$1\leq i\leq j$ and $q>0$.
By $H(P_{i})\leq H$ we have the inequality for $q=0$.
Then $\beta$ and all $L_{P_{i}}^{\ast}(q)$ decay with slope $-1/m$,
so the claim is true for any 
$L_{P_{i}}^{\ast}$ and $q\geq q_{(i)}$ where $q_{(i)}$
is the point where $L_{P_{i}}^{\ast}$ starts to rise with slope $1$, that is where equality of the expressions in \eqref{eq:piscibus} holds
for $P=P_{i}$.
By the property $\vert P_{i}(\zeta)\vert\leq H^{-w}$
we know that the rising phase $-\log \vert P_{i}(\zeta)\vert+q$
lies entirely below the one of $\beta$ which is $w+q$,
with equality for $i=1$ and $q$ large enough that $\beta$
and $L_{P_{1}}^{\ast}$ both rise. So the claim
is true for $q\geq q_{0}:= \max q_{(i)}$. If we had
$\beta(q_{1})<L_{P_{i}}^{\ast}(q_{1})$
for some $i$ and some $q_{1}>0$, then by the above findings 
at $q_{1}$ the function $L_{P_{i}}^{\ast}$ must already rise
with slope $1$. However, since $\beta$ has slope at most $1$,
we would have the inequality $\beta(q)<L_{P_{i}}^{\ast}(q)$
for all $q\geq q_{1}$. This
contradicts our second finding and proves the claim.
It is easily verified that $\tilde{q}$
is the point where 
$\beta$ changes slope from $-1/m$ to $1$, and that
$\beta(\tilde{q})$ equals the right hand side in \eqref{eq:umrechnen}. It is further readily verified that
$L_{P_{1}}^{\ast}$ rises at $\tilde{q}$, and we infer
the identities in \eqref{eq:umrechnen} from the identity
statement above. Finally, since the $P_{i}, 1\leq i\leq j$, are linearly independent by assumption, we conclude
$L_{m,j}^{\ast}(\tilde{q})\leq \max_{1\leq i\leq j} L_{P_{i}}^{\ast}(\tilde{q})=
L_{P_{1}}^{\ast}(\tilde{q})=\beta(\tilde{q})$. 
The proof of \eqref{eq:umrechnen} is thus complete.
\end{proof}

We remark that Lemma~\ref{apply} and its proof are closely 
connected to the identities
\begin{equation} \label{eq:elaine}
(w_{m}(\zeta)+1)\left(\frac{1}{m}+\underline{\psi}_{m,1}^{\ast}\right)=
(\widehat{w}_{m}(\zeta)+1)\left(\frac{1}{m}+\overline{\psi}_{m,1}^{\ast}\right)=
\frac{m+1}{m},
\end{equation}
established in~\cite[Theorem~1.4]{ss}. A generalization for
higher successive minima as in Lemma~\ref{apply} was 
pointed out in~\cite{j2}.


\subsection{Certain estimates for polynomials
and Diophantine exponents} \label{prepa}



%
%

In this section
we provide several, mostly elementary, facts on the
relation between best
approximation polynomials from Definition~\ref{newdef} and the exponents $w_{n}(\zeta),
\widehat{w}_{n}(\zeta)$. We will be concerned with irreducibilty
of best approximation polynomials. First we recall 
that as pointed out in~\cite{wirsing}, for 
any given $n,\zeta$ and any $w<w_{n}(\zeta)$ there exist 
infinitely many
{\em irreducible} integer polynomials of degree at most
$n$ for which \eqref{eq:w} holds. This follows from a 
general estimate
on the height of products of polynomials, sometimes referred to as Gelfond's Lemma. It states that
for any positive integer $n$ there exists an absolute
constant $K(n)$ such that 
\begin{equation} \label{eq:wirsing}
K(n)^{-1}H(P)H(Q) \leq H(PQ) \leq K(n)H(P)H(Q)
\end{equation}
holds for all polynomials $P,Q\in\mathbb{Z}[T]$ of 
degree at most $n$. This estimate and the corollary above
on irreducibility were also used by Wirsing in~\cite{wirsing}. 
The same kind of argument, already used in~\cite{buschl},
yields that in case of 
$w_{n}(\zeta)>w_{n-1}(\zeta)$, 
infinitely many among the best approximation polynomials 
associated to $(n,\zeta)$ are 
irreducible of degree {\em precisely} $n$.
Similarly, the stronger claim 
$\widehat{w}_{n}(\zeta)>w_{n-1}(\zeta)$
implies the same property for all best approximation polynomials
of sufficiently large index/height, as already
observed in the proof of~\cite[Lemma~3.2]{ichindag}. 
For simplicity we put $P_{k}=P_{k}^{n,\zeta}$ for the next result.

\begin{proposition} \label{pr}
Let $n\geq 1$ be an integer, $\zeta$ be a transcendental
real number and
$(P_{k})_{k\geq 1}$ the sequence of best approximation
polynomials associated to $(n,\zeta)$.
For any
$\epsilon>0$ and all sufficiently large $k$, upon
denoting $X_{k+1}=H(P_{k+1})$ we have
\begin{equation} \label{eq:by}
H(P_{k})<H(P_{k+1})= X_{k+1}, \qquad
\vert P_{k+1}(\zeta)\vert<\vert P_{k}(\zeta) \vert 
\leq X_{k+1}^{-\widehat{w}_{n}(\zeta)+\epsilon}.
\end{equation}
\end{proposition}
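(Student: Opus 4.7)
The first two inequalities in \eqref{eq:by}, namely $H(P_{k}) < H(P_{k+1})$ and $\vert P_{k+1}(\zeta)\vert < \vert P_{k}(\zeta)\vert$, are already recorded immediately after Definition~\ref{newdef} as direct consequences of the way the sequence of best approximation polynomials is defined, so only the rightmost inequality $\vert P_{k}(\zeta)\vert \leq X_{k+1}^{-\widehat{w}_{n}(\zeta)+\epsilon}$ requires an argument. The plan is to apply the defining property of $\widehat{w}_{n}(\zeta)$ at a real height parameter $X$ just below $X_{k+1}$, and then invoke the optimality of $P_{k}$ to transfer the resulting bound.

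Concretely, fix $\epsilon>0$ and set $w := \widehat{w}_{n}(\zeta) - \epsilon/2$. By the definition of $\widehat{w}_{n}(\zeta)$ associated to \eqref{eq:w}, there exists $X_{0}$ such that for every real $X \geq X_{0}$ the system \eqref{eq:w} admits a nonzero integer polynomial solution of degree at most $n$. Since the sequence $(P_{k})$ has strictly increasing heights and $\zeta$ is transcendental, $X_{k+1} \to \infty$ with $k$, and for all sufficiently large $k$ one may take $X := X_{k+1} - 1 \geq X_{0}$. This choice yields a nonzero integer polynomial $Q$ of degree at most $n$ with
\[
H(Q) \leq X_{k+1} - 1 < H(P_{k+1}), \qquad \vert Q(\zeta)\vert \leq (X_{k+1}-1)^{-w}.
\]
Because $Q$ has height strictly smaller than $H(P_{k+1})$, the optimality recorded after Definition~\ref{newdef} forces $\vert P_{k}(\zeta)\vert \leq \vert Q(\zeta)\vert$: on the height range $[H(P_{k}), H(P_{k+1})-1]$ the minimum of $\vert P(\zeta)\vert$ over nonzero integer polynomials of degree at most $n$ is attained at $P_{k}$, and on any smaller range the minimum is even smaller, realized by some earlier $P_{j}$ with $\vert P_{k}(\zeta)\vert \leq \vert P_{j}(\zeta)\vert \leq \vert Q(\zeta)\vert$.

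It then remains to verify the elementary inequality $(X_{k+1}-1)^{-w} \leq X_{k+1}^{-w+\epsilon/2} = X_{k+1}^{-\widehat{w}_{n}(\zeta)+\epsilon}$ for all large $k$, which amounts to noting that the factor $(1-1/X_{k+1})^{-w} = 1 + O(1/X_{k+1})$ is comfortably absorbed into the slack $X_{k+1}^{\epsilon/2}$ once $X_{k+1}$ is large. No step poses a genuine obstacle; the argument is a careful unpacking of the two definitions, and the two separate shifts by $\epsilon/2$ — one to convert the supremum defining $\widehat{w}_{n}(\zeta)$ into a uniform guarantee for all large $X$, the other to pass from the shifted parameter $X_{k+1}-1$ back to $X_{k+1}$ in the exponent — are the only bookkeeping details.
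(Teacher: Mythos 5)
Your argument is correct and is essentially the paper's own proof, just run directly instead of by contradiction: both apply the definition of $\widehat{w}_{n}(\zeta)$ at a height parameter just below $X_{k+1}$ and use the minimality of $P_{k}$ among nonzero polynomials of height strictly less than $H(P_{k+1})$, with your $\epsilon/2$ bookkeeping merely making explicit what the paper compresses into ``this obviously contradicts the definition.'' No substantive difference.
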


\begin{proof}
We only have to show $\vert P_{k}(\zeta) \vert 
\leq X_{k+1}^{-\widehat{w}_{n}(\zeta)+\epsilon}$.
Suppose otherwise
$\vert P_{k}(\zeta)\vert>
X_{k+1}^{-\widehat{w}_{n}(\zeta)+\epsilon}$.
Since $P_{k+1}$ is the best approximation polynomial
succeeding $P_{k}$, for any non-zero $Q$ of degree at most $n$ and height 
strictly less than $X_{k+1}$ we have
$\vert Q(\zeta)\vert\geq 
\vert P_{k+1}(\zeta)\vert>X_{k+1}^{-\widehat{w}_{n}(\zeta)+\epsilon}$. 
This obviously contradicts the 
definition of $\widehat{w}(\zeta)$ for large $k$.
\end{proof}

Proposition~\ref{pr} tells us that
if $\widehat{w}_{n}(\zeta)$ is large, infinitely
often there are two linearly independent best approximation
polynomials with small evaluation at $\zeta$. The claim
is closely related the more general to~\cite[Theorem~1.1]{ss}
and the derived inequalities
$\underline{\psi}_{n,j+1}^{\ast}\leq \overline{\psi}_{n,j}^{\ast}$
for $1\leq j\leq n$ for $\mathbb{Q}$-linearly independent
real vectors $(1,\zeta_{1},\ldots,\zeta_{n})$, taking $j=1$.

\begin{lemma}  \label{ehklar}
Let $n\geq 1$ be an integer, $\zeta$ be a transcendental real number 
and $(P_{k})_{k\geq 1}$ be the sequence
of best approximation polynomials associated to $(n,\zeta)$. 
Let $\varepsilon>0$. Then for all large $k\geq k_{0}(\varepsilon)$
we have
\[
H(P_{k+1}) \leq H(P_{k})^{w_{n}(\zeta)/\widehat{w}_{n}(\zeta)+\varepsilon}.
\]
\end{lemma}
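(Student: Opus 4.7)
The plan is to sandwich the quantity $|P_k(\zeta)|$ between two suitable powers of the heights $H(P_k)$ and $H(P_{k+1})$, and then take logarithms.

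First I would fix an auxiliary $\varepsilon_1>0$, to be chosen small relative to $\varepsilon$, $w_n(\zeta)$ and $\widehat{w}_n(\zeta)$. The upper bound
\[
|P_k(\zeta)| \leq H(P_{k+1})^{-\widehat{w}_n(\zeta)+\varepsilon_1}
\]
is provided directly by Proposition~\ref{pr} for all $k$ larger than some threshold depending on $\varepsilon_1$.

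Next I would derive a matching lower bound from the definition of $w_n(\zeta)$ as a supremum. Since $w_n(\zeta)+\varepsilon_1 > w_n(\zeta)$, the system $H(P)\leq X$, $0<|P(\zeta)|\leq X^{-w_n(\zeta)-\varepsilon_1}$ in nonzero integer $P$ of degree at most $n$ has no solution once $X$ is large enough. Specializing to $X=H(P_k)$ and to $P=P_k$ (which is nonzero, of degree at most $n$, and satisfies $H(P_k)\leq X$), I obtain
\[
|P_k(\zeta)| > H(P_k)^{-w_n(\zeta)-\varepsilon_1}
\]
for all sufficiently large $k$.

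Combining the two estimates and taking logarithms, using that $\widehat{w}_n(\zeta)\geq n\geq 1$ so the denominator $\widehat{w}_n(\zeta)-\varepsilon_1$ stays positive for small $\varepsilon_1$, I would get
\[
\log H(P_{k+1}) \leq \frac{w_n(\zeta)+\varepsilon_1}{\widehat{w}_n(\zeta)-\varepsilon_1}\,\log H(P_k).
\]
Choosing $\varepsilon_1$ sufficiently small in terms of $\varepsilon$, $w_n(\zeta)$ and $\widehat{w}_n(\zeta)$, the coefficient on the right hand side is at most $w_n(\zeta)/\widehat{w}_n(\zeta)+\varepsilon$, yielding the claim. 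The argument is essentially routine once Proposition~\ref{pr} is in hand; the only care needed is the bookkeeping of the auxiliary parameter $\varepsilon_1$ versus the target $\varepsilon$, and verifying positivity of the denominator. No deeper obstacle is anticipated.
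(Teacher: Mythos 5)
Your argument is correct and is essentially the paper's own proof: both rest on the lower bound $|P_k(\zeta)|>H(P_k)^{-w_n(\zeta)-\varepsilon_1}$ from the definition of $w_n(\zeta)$ combined with the uniform upper bound supplied by $\widehat{w}_n(\zeta)$, followed by taking logarithms and shrinking the auxiliary parameter. The only cosmetic difference is that you quote Proposition~\ref{pr} for the bound $|P_k(\zeta)|\leq H(P_{k+1})^{-\widehat{w}_n(\zeta)+\varepsilon_1}$, whereas the paper applies the definition of $\widehat{w}_n(\zeta)$ directly at the tailored height $X_k=H(P_k)^{(w_n(\zeta)+\epsilon)/(\widehat{w}_n(\zeta)-\epsilon)}$ and deduces $H(P_{k+1})\leq X_k$ from the best-approximation property.
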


\begin{proof}
By definition of $w_{n}(\zeta)$ for any $\epsilon>0$ and 
large $k\geq k_{0}(\epsilon)$ we have
\[
\vert P_{k}(\zeta)\vert \geq H(P_{k})^{-w_{n}(\zeta)-\epsilon}.
\]
Let
\[
X_{k}=H(P_{k})^{(w_{n}(\zeta)+\epsilon)/(\widehat{w}_{n}(\zeta)-\epsilon)}.
\]
Then 
\[
\vert P_{k}(\zeta)\vert \geq X_{k}^{-\widehat{w}_{n}(\zeta)+\epsilon}.
\]
On the other hand, 
by definition of $\widehat{w}_{n}(\zeta)$ for large $k$ there has to be polynomial
$Q_{k}$ of height less then $X_{k}$ for which 
$\vert Q_{k}(\zeta)\vert<X_{k}^{-\widehat{w}_{n}(\zeta)+\epsilon}\leq \vert P_{k}(\zeta)\vert$.
By definition of the best approximation polynomials we see that $Q_{k}=P_{k+1}$ is a suitable choice.  
Hence
\[
H(P_{k+1})\leq X_{k}= H(P_{k})^{(w_{n}(\zeta)+\epsilon)/(\widehat{w}_{n}(\zeta)-\epsilon)}.
\]
The claim follows
since $\epsilon$ can be chosen arbitrarily small, 
for $\varepsilon$ some modification of $\epsilon$.
\end{proof}

The next result will be applied
to best approximation polynomials as well 
in order to simplify the proof of our main results,
and establish the upper bounds
in \eqref{eq:upbo} of Lemma~\ref{lehmer}.

\begin{lemma} \label{lainyweg}
Let $P(T),Q(T)$ be two integer polynomials of degrees $a,b$
respectively, without common factor. Then the set of polynomials
\[
\Omega:=\left\{ P, TP,\ldots, T^{b-1} P, 
Q, TQ,\ldots, T^{a-1}Q\right\},
\]
is linearly independent, and thus spans the space of
polynomials of degree at most $a+b-1$ in a direct sum.
\end{lemma}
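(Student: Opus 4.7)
The plan is to prove linear independence directly using the coprimality assumption, then derive the spanning statement by a dimension count. The set $\Omega$ contains $b + a = a+b$ polynomials, each of degree at most $a+b-1$, and the ambient space of polynomials of degree at most $a+b-1$ has dimension $a+b$. Hence it suffices to establish linear independence, whereupon spanning follows automatically.

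To show linear independence, I would suppose we are given a relation
\[
\sum_{i=0}^{b-1} \alpha_{i} T^{i} P(T) + \sum_{j=0}^{a-1} \beta_{j} T^{j} Q(T) = 0
\]
with coefficients $\alpha_{i}, \beta_{j}\in \mathbb{R}$ (or $\mathbb{Q}$). Grouping the terms into the polynomials $A(T) = \sum_{i=0}^{b-1}\alpha_{i}T^{i}$ of degree $\leq b-1$ and $B(T) = \sum_{j=0}^{a-1}\beta_{j}T^{j}$ of degree $\leq a-1$, the relation becomes the polynomial identity
\[
A(T) P(T) = - B(T) Q(T).
\]
The key step is now to invoke the coprimality of $P$ and $Q$: since $\gcd(P,Q)=1$ in $\mathbb{Q}[T]$ (a common rational factor would extend to a common integer factor up to a rational multiple, contradicting the hypothesis), and since $Q$ divides the right hand side, it must divide $A(T)P(T)$, and therefore divide $A(T)$. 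But $\deg A \leq b-1 < b = \deg Q$, which forces $A \equiv 0$. The identity then reduces to $B(T) Q(T)\equiv 0$, and since $Q \not\equiv 0$ we conclude $B\equiv 0$ as well, so all coefficients $\alpha_{i}, \beta_{j}$ vanish.

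There is no real obstacle here; the statement is essentially the classical fact that the Sylvester matrix of $P$ and $Q$ has nonzero determinant (its determinant is the resultant $\mathrm{Res}(P,Q)$) precisely when $P$ and $Q$ share no common factor. The argument above is just the coordinate-free reformulation via polynomial divisibility, which avoids dealing with the Sylvester matrix explicitly. The only minor subtlety worth mentioning is the passage between integer and rational coprimality, which is harmless since a nontrivial common factor over $\mathbb{Q}[T]$ would produce a nontrivial common factor over $\mathbb{Z}[T]$ after clearing denominators (indeed, Gauss's lemma gives the equivalence cleanly).
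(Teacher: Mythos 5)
Your proof is correct and follows essentially the same route as the paper: a linear relation among the elements of $\Omega$ is regrouped into an identity $A(T)P(T)\equiv -B(T)Q(T)$ with $\deg A\leq b-1$, $\deg B\leq a-1$, which is then ruled out by coprimality of $P,Q$ together with the degree bound (unique factorization), and the spanning claim follows from the dimension count $\sharp\Omega=a+b$. The paper even notes, as you do, that this is equivalent to the nonvanishing of the resultant.
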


\begin{proof}
Assume the claim is false and
$\Omega$ is linearly dependent. Then by the structure 
of $\Omega$, there exist polynomials $A,B$ not both identically zero and of degree at most $n-1<n$ such that
$AP\equiv BQ$. However, since $P$ and $Q$ have
no common factor and $B$ has degree less than $P$, we cannot have such an identity
by the unique factorization in $\mathbb{Z}[T]$.
\end{proof}

Essentially the same claim was already implicitly 
used within
the proof of~\cite[Theorem~2.1]{ichindag}, where 
an equivalent proof using the resultant was given.
For the immediate concern of
Theorem~\ref{thm} we state a direct corollary with $a=b=n$.

\begin{corollary} \label{nocom}
Let $P,Q$ be two integer polynomials both of degree precisely
$n$ and without common factor. Then the set of polynomials
\[
\mathscr{P}:=\bigcup_{0\leq j\leq n-2}
\left\{ T^{j}P, \; T^{j}Q\right\},
\]
is linearly independent and thus spans a $2n-2$
dimensional hyperspace of the vector space of polynomials
of degree at most $2n-2$.
\end{corollary}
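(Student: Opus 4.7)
The plan is to deduce this directly from Lemma~\ref{lainyweg} with the choice $a = b = n$. First I would invoke that lemma to obtain the linearly independent family
\[
\Omega = \{ P, TP, \ldots, T^{n-1}P,\; Q, TQ, \ldots, T^{n-1}Q\},
\]
which consists of $2n$ polynomials spanning in a direct sum the $2n$-dimensional space of polynomials of degree at most $a + b - 1 = 2n-1$.

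Next, I would observe that $\mathscr{P}$ is obtained from $\Omega$ by discarding exactly the two highest-index members $T^{n-1}P$ and $T^{n-1}Q$ (both of degree $2n-1$). Hence $\mathscr{P} \subset \Omega$ with $|\mathscr{P}| = 2(n-1) = 2n-2$. Since any subset of a linearly independent set is linearly independent, the independence of $\mathscr{P}$ is immediate.

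For the dimension count, I would simply note that every element of $\mathscr{P}$ has degree at most $n + (n-2) = 2n-2$, so the span of $\mathscr{P}$ lies inside the $(2n-1)$-dimensional space of polynomials of degree at most $2n-2$. With $2n-2$ linearly independent vectors, $\mathscr{P}$ must span a subspace of dimension exactly $2n-2$, which is a hyperspace (codimension one) as claimed.

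There is no real obstacle here: the entire content is absorbed into Lemma~\ref{lainyweg}, which itself rests on the unique factorization in $\mathbb{Z}[T]$ and the degree bookkeeping already recorded in its proof.
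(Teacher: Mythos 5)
Your proposal is correct and matches the paper's intent: the corollary is stated there precisely as a direct consequence of Lemma~\ref{lainyweg} with $a=b=n$, and your explicit bookkeeping (dropping $T^{n-1}P$, $T^{n-1}Q$, noting a subset of an independent set stays independent, and counting $2n-2$ vectors inside the $(2n-1)$-dimensional space of polynomials of degree at most $2n-2$) is exactly the implicit argument. Nothing is missing.
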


Finally we recall two facts from~\cite{buschl}.
A special case of~\cite[Theorem~2.2]{buschl}
shows that the condition \eqref{eq:burt}, that is 
$w_{n}(\zeta)>w_{n-1}(\zeta)$, implies
\begin{equation} \label{eq:puschel}
\widehat{w}_{n}(\zeta) \leq n+(n-1)\frac{\widehat{w}_{n}(\zeta)}{w_{n}(\zeta)}.
\end{equation}  
It is hard to tell if \eqref{eq:puschel}
holds without the imposed condition. See~\cite{ichcomments}
for estimates in the general case.
Moreover we will need the estimate
\begin{equation} \label{eq:mundn}
\min\{ w_{n_{1}}(\zeta),\widehat{w}_{n_{2}}(\zeta)\}\leq n_{1}+n_{2}-1, 
\end{equation}
valid for any positive integers $n_{1},n_{2}$ and any
transcendental real number $\zeta$, as shown 
in~\cite[Theorem~2.3]{buschl}. Notice the choice $n_{1}=n_{2}=n$ 
recovers $\widehat{w}_{n}(\zeta)\leq 2n-1$ 
due to Davenport and Schmidt.

\section{Proofs} \label{proofs}

We start with the proofs of the lemmata in Section~\ref{n}.

\begin{proof}[Proof of Lemma~\ref{lemur}]
Write the polynomials $A_{k}, B_{k}, C_{k}$ 
in coordinates as well, such that glued together
they form vectors 
\[
\underline{v}_{k}=(a_{k,0},\ldots,a_{k,n/2-1},b_{k,0},\ldots,
b_{k,n/2-1},c_{k,0},\ldots,c_{k,n/2-1})\in\mathbb{Z}^{3n/2}.
\]
Multiplying out the product in
\eqref{eq:bedingung} using \eqref{eq:dingung}
and assuming all coefficients to vanish
yields a system of $3\frac{n}{2}$ linear equations in
$3\frac{n}{2}$ variables. It can
be checked that this system corresponds
to $\Lambda_{n}(\underline{h}_{k}^{n,\zeta})
\cdot \underline{v}_{k}^{T}=\underline{0}$, with
the matrix $\Lambda_{n}(\underline{h}_{k}^{n,\zeta})$
from Section~\ref{n} and where $\underline{x}^{T}$ denotes the transpose
of $\underline{x}$. 
From linear algebra
this system has a non-trivial solution $\underline{v}_{k}$ 
if and only if the corresponding 
matrix $\Lambda_{n}(\underline{h}_{k}^{n,\zeta})$ is singular, or
equivalently $\Phi_{n}(\underline{h}_{k}^{n,\zeta})=0$.
Hence the first two conditions are equivalent. 
Since any polynomial $P_{k-1}^{n,\zeta}A_{k}, P_{k}^{n,\zeta}B_{k}, P_{k+1}^{n,\zeta}C_{k}$ obviously
lies in the span of
the polynomials in \eqref{eq:ingung}, and conversely
the latter form any possible linear combination as in \eqref{eq:bedingung},
the last two assertions are equivalent as well.
\end{proof}

\begin{proof}[Proof of Lemma~\ref{lehmer}]
The claim $\Phi(2)=2$ resembles the remark
towards the end of Section~\ref{n} on three consecutive best approximation polynomials
being linearly independent infinitely often. 
Concerning the upper bound in \eqref{eq:upbo},
one can even restrict to
two successive best approximation polynomials. 
As pointed out in Remark~\ref{hirsch} infinitely often
two successive best approximation polynomials 
$P_{k-1}=P_{k-1}^{n,\zeta}, P_{k}=P_{k}^{n,\zeta}$ have 
no common factor~\cite[Theorem~2b]{davsh}. 
For such $P_{k-1}, P_{k}$ of degrees
$d_{k-1}, d_{k}$ respectively,
first observe
that the set
\begin{equation} \label{eq:lemmer}
\Omega_{k}:=\left\{ P_{k-1}, TP_{k-1},\ldots, T^{d_{k}-1}P_{k-1},
 P_{k}, TP_{k},\ldots, T^{d_{k-1}-1}P_{k}\right\},
\end{equation}
is linearly independent by Lemma~\ref{lainyweg}.
Observe every element in $\Omega_{k}$
has degree at most $d_{k-1}+d_{k}-1$.
Now when we add the polynomials in
$\mathscr{Q}_{k}:=\{ T^{d_{k}}P_{k-1},\ldots,
T^{m-d_{k-1}}P_{k-1}\}$ one by one to $\Omega_{k}$, 
the dimension of the span must
increase in every step since the new polynomial has higher degree than
any polynomial in the old span. 
We see that $\Omega_{k}\cup \mathscr{Q}_{k}$ consists
of linearly independent polynomials of
degree at most $m$ and
has cardinality 
$d_{k}+d_{k-1}+m-(d_{k}+d_{k-1}-1)=m+1$. Hence
indeed $\Psi(n)\leq 2n-1$ and the quantities
$\widetilde{\Psi}, \Psi$ 
are well-defined. A very similar inductive argument ensures the
first claim of the lemma. We next prove
the lower bound in \eqref{eq:upbo}. Choose $\zeta$ that satisfies
\eqref{eq:hoffnung}. This is possible since the set of $\zeta$
with
$w_{n-1}(\zeta)\geq n$ has Hausdorff dimension
$n/(n+1)<1$ by Baker and Schmidt~\cite{baks}
and Bernik~\cite{bernik}, whereas $\widehat{w}_{n}(\zeta)\geq n$, i.e.
\eqref{eq:wmono}, holds for
any transcendental real $\zeta$. The condition \eqref{eq:hoffnung}
ensures $d_{k-1}=d_{k}=d_{k+1}=n$ for $d_{j}$ the degree
of $P_{j}^{n,\zeta}$. The estimate 
$\Psi(n,\zeta)\geq 3\frac{n}{2}-1$ for such $\zeta$
follows as we have only
$3(m-n+1)<m+1$ polynomials in the union in \eqref{eq:zingunga}
if $m<3\frac{n}{2}-1$. Thus in particular
$\Psi(n)\geq 3\frac{n}{2}-1$.
The last claim follows similarly from $d_{k-1}=d_{k}=d_{k+1}=n$
upon \eqref{eq:hoffnung},
and the linear independence of $\Omega_{k}$.
%
%
\end{proof}

We remark that the quoted result~\cite[Theorem~2b]{davsh} was
a crucial observation to infer the bound $2n-1$ 
for $\widehat{w}_{n}(\zeta)$ in that paper.
Now we turn towards the proofs of the main results Theorems~\ref{thm},
~\ref{gutsatz},~\ref{cumbersome}. 
The key idea of all proofs
is to blow up the dimension of the problem from $n$
to some $m$ and observe that the assumption of 
large $\widehat{w}_{n}(\zeta)$ conflicts with Minkowski's Second Convex Body Theorem
in this modified approximation problem, related to the combined Schmidt-Summerer
graph in dimension $m$ from Definition~\ref{schmus}. This contradiction
will essentially yield the respective
upper bounds. For Theorem~\ref{thm}, concretely 
we choose $m=2n-2$. 
The method already requires some subtle application of results from 
Section~\ref{prepa} to work properly. The technical problems
increase as $m<2n-2$ decreases, 
thus leading only to conditional bounds so far, see below. 

\begin{proof}[Proof of Theorem~\ref{thm}]
We proof the claim indirectly.
Assume the claim of the theorem is false, 
that is there exists an integer $n$ and a
real number $\zeta$ such that 
$\widehat{w}_{n}(\zeta)>\theta_{n}$. We may assume 
$\zeta$ is transcendental in view of identity \eqref{eq:alg} for
an algebraic number $\zeta$. 
Since $\theta_{n}>2n-2$ we conclude $\widehat{w}_{n}(\zeta)>2n-2$. 
Hence application of the relation \eqref{eq:mundn} with 
$n_{1}=n, n_{2}=n-1$ yields 
\begin{equation} \label{eq:eqq}
w_{n-1}(\zeta)\leq 2n-2<\theta_{n}<\widehat{w}_{n}(\zeta).
\end{equation}
For simplicity write $P_{k}=P_{k}^{n,\zeta}$ for
$(P_{k}^{n,\zeta})_{k\geq 1}$ be the sequence of best approximation
polynomials associated to $(n,\zeta)$ as in Definition~\ref{newdef}. 
As carried out in Section~\ref{prepa}, Wirsing's estimate \eqref{eq:wirsing} and \eqref{eq:eqq}
imply that for any large $k$ the best 
approximation polynomial $P_{k}$
is irreducible of degree precisely $n$. 
Now consider the polynomial approximation problem for polynomials of degree at most $2n-2$ in the variable $\zeta$, related
to the combined Schmidt-Summerer graph 
associated to $(2n-2,\zeta)$.
For any $k\geq 1$, let 
\[
V_{k,j}(T)= T^{j}P_{k}(T), \qquad 0\leq j\leq n-2,
\]
and further define the sets 
\[
\mathscr{V}_{k}= \bigcup_{j=0}^{n-2} V_{k,j}, \qquad \mathscr{P}_{k}= \mathscr{V}_{k-1}\cup \mathscr{V}_{k}.
\]
We see that $\mathscr{P}_{k}$ consists of $2n-2$ polynomials with integer coefficients and degree at most $2n-2$.
Since $P_{k-1}, P_{k}$
are distinct and irreducible of degree precisely $n$, we may apply
Corollary~\ref{nocom} with $P=P_{k-1}, Q=P_{k}$ and
$\mathscr{P}=\mathscr{P}_{k}$
and see that for any sufficiently large $k$, 
the set $\mathscr{P}_{k}$
is linearly independent and forms
a basis of a $2n-2$ dimensional hyperspace of the space of polynomials of degree at most $2n-2$. 
Additionally consider the subsequent best approximation polynomial $P_{k+1}$ and the set
\[
\mathscr{R}_{k}= \mathscr{P}_{k}\cup \mathscr{V}_{k+1}=\mathscr{V}_{k-1}\cup \mathscr{V}_{k}\cup \mathscr{V}_{k+1}.
\]
We identify a polynomial with its coefficient vector 
in $\mathbb{Z}^{2n-1}$ in the sequel. We distinguish two cases.

Case 1: There exist arbitrarily large $k$ such that $\mathscr{R}_{k}$ spans the space
of polynomials of degree at most $2n-2$. In other words for infinitely many $k$ there exists some polynomial
$R_{k+1}$ in $\mathscr{V}_{k+1}$ not included in the span of $\mathscr{P}_{k}$.
Let $\epsilon\in(0,\theta_{n}-(2n-2))$ be arbitrary but fixed. 
Observe that 
\begin{equation} \label{eq:noned}
H(P_{k-1})<
H(P_{k})=H(V_{k,0})=H(V_{k,1})=\cdots=H(V_{k,n-2}), \qquad k\geq 1.
\end{equation}
Moreover without loss of generality we may 
assume that $\zeta\in(0,1)$ and hence
\begin{equation}  \label{eq:haha} 
\vert P_{k}(\zeta)\vert = \vert V_{k,0}(\zeta)\vert = \max_{0\leq j\leq n-2} \vert V_{k,j}(\zeta)\vert, \qquad k\geq 1.
\end{equation}
For large $k$, by \eqref{eq:by} from 
Proposition~\ref{pr} and \eqref{eq:haha}, we have
\begin{equation} \label{eq:glei}
\vert P(\zeta)\vert \leq H(P_{k})^{-\widehat{w}_{n}(\zeta)+\epsilon}, \qquad P\in \mathscr{P}_{k}.
\end{equation}
We consider the combined Schmidt-Summerer
graph associated to $(m,\zeta)$ with $m=2n-2$.
as defined in Section~\ref{schmidt}. 
Keep in mind that \eqref{eq:noned} and \eqref{eq:haha} with \eqref{eq:piscibus} imply
\begin{equation} \label{eq:nuned}
\max_{P\in\mathscr{V}_{k}} L_{P}^{\ast}(q) = L_{P_{k}}^{\ast}(q), \qquad \qquad k\geq 1,\; q>0.
\end{equation}
First look at the points $(q_{k},L_{P_{k}}^{\ast}(q_{k}))$
where the graphs of $L_{P_{k-1}}^{\ast}$ and $L_{P_{k}}^{\ast}$ intersect, that is
$q_{k}$ is defined via
\begin{equation} \label{eq:jgleich}
L_{P_{k-1}}^{\ast}(q_{k})= L_{P_{k}}^{\ast}(q_{k}).
\end{equation}

Since obviously at such points $L_{P_{k-1}}^{\ast}$ rises
and $L_{P_{k}}^{\ast}$ decays,
by \eqref{eq:piscibus} the value $q_{k}$
is implicitly defined by
\[
\log H(P_{k})-\frac{q_{k}}{2n-2}= \log \vert P_{k-1}(\zeta)\vert + q_{k},
\]
however, we will not need this directly. In view of \eqref{eq:nuned} we know that 
\begin{equation} \label{eq:hehe}
L_{P}^{\ast}(q_{k})\leq L_{P_{k-1}}^{\ast}(q_{k})=L_{P_{k}}^{\ast}(q_{k}), \qquad P\in\mathscr{P}_{k},
\end{equation}
on the other hand the 
differences $\vert L_{P}^{\ast}(q_{k})-L_{Q}^{\ast}(q_{k})\vert$ 
among $P,Q\in\mathscr{P}_{k}$
is uniformly bounded for $k\geq 1$.

From \eqref{eq:jgleich} and \eqref{eq:hehe} and since the span of
$\mathscr{P}_{k}$ has full dimension $2n-2$, we infer $L_{2n-2,2n-2}^{\ast}(q_{k}) \leq L_{P_{k}}^{\ast}(q_{k})$.
In fact, in view of
\eqref{eq:noned} and \eqref{eq:glei},
we may apply Lemma~\ref{apply} to
$j=2n-2$, the polynomials in $\mathscr{P}_{k}$
and a parameter $w\geq \widehat{w}_{n}(\zeta)-\epsilon$, 
with $q=q_{k}$. 
From its claim \eqref{eq:umrechnen} we obtain
\begin{equation} \label{eq:jetzaber}
L_{2n-2,2n-2}^{\ast}(q_{k}) \leq L_{P_{k}}^{\ast}(q_{k})\leq 
q_{k}\cdot \frac{2n-2-\widehat{w}_{n}(\zeta)}{(2n-2)(1+\widehat{w}_{n}(\zeta))}+\tilde{\epsilon}q_{k}
\end{equation}
where $\tilde{\epsilon}$ is some variation of $\epsilon$ which tends to $0$ as $\epsilon$ does.
Observe the expression will be negative
when $\epsilon$ (or $\tilde{\epsilon}$) is small 
enough since $\widehat{w}_{n}(\zeta)>\theta_{n}>2n-2$ by assumption.
Hence, with \eqref{eq:nurmi}, for the last successive
minimum function we conclude
\begin{equation}  \label{eq:hand}
L_{2n-2,2n-1}^{\ast}(q_{k}) \geq 
(2-2n)q_{k}\cdot\frac{2n-2-\widehat{w}_{n}(\zeta)}{(2n-2)(1+\widehat{w}_{n}(\zeta))}-(2n-2)\tilde{\epsilon}q_{k}+O(1).
\end{equation}
We now want to derive an upper bound 
for $L_{2n-2,2n-1}^{\ast}(q_{k})$ which is smaller under
our assumption $\widehat{w}_{n}(\zeta)>\theta_{n}$, which
will lead to the desired contradiction. By assumption of case 1 for infinitely many $k$
there exists $R_{k+1}\in\mathscr{V}_{k+1}=\mathscr{R}_{k} \setminus \mathscr{P}_{k}$
which does not lie in the span of $\mathscr{P}_{k}$.
Obviously $H(R_{k+1})=H(P_{k+1})$ by construction. 
To shorten the notation let $H_{l}=H(P_{l})$ for any 
integer $l\geq 1$. Then Lemma~\ref{ehklar} implies 
$H_{k+1}\leq H_{k}^{w_{n}(\zeta)/\widehat{w}_{n}(\zeta)+\epsilon}$. 
On the other hand, inequality \eqref{eq:puschel},
which can be applied since its condition $w_{n}(\zeta)>w_{n-1}(\zeta)$ is satisfied
by virtue of \eqref{eq:eqq} and \eqref{eq:wmono}, 
can be reformulated to $w_{n}(\zeta)/\widehat{w}_{n}(\zeta)\leq (n-1)(\widehat{w}_{n}(\zeta)-n)^{-1}$. 
Thus for large $k$ we deduce
\begin{equation}  \label{eq:jojo}
H_{k+1}\leq H_{k}^{\nu+\epsilon}, \qquad \nu= \frac{n-1}{\widehat{w}_{n}(\zeta)-n}.
\end{equation}

Since $(q_{k},L_{P_{k}}^{\ast}(q_{k}))$ lies in the graph 
of $L_{P_{k}}^{\ast}$ which decays with slope $-1/(2n-2)$ in a neighborhood $U_{k}$
of $q_{k}$ (since at $q_{k}$ it meets $L_{P_{k-1}}^{\ast}$ by \eqref{eq:jgleich} and $H_{k}>H_{k-1}$), we have
\[
\log H_{k}-\frac{q_{k}}{2n-2}= L_{P_{k}}^{\ast}(q_{k}).
\]
Together with \eqref{eq:jetzaber} we derive
\begin{equation} \label{eq:jordan}
\log H_{k}\leq \frac{q_{k}}{2n-2}+
q_{k}\cdot\frac{2n-2-\widehat{w}_{n}(\zeta)}{(2n-2)(1+\widehat{w}_{n}(\zeta))}+\tilde{\epsilon}q_{k}.
\end{equation}

It is not hard to see that the function $L_{R_{k+1}}^{\ast}$ decays at $q_{k}$ as well. We carry this out.
We have $L_{R_{k+1}}^{\ast}(q_{k})>0$ since otherwise we get a contradiction
to \eqref{eq:lsumme} as all $L_{2n-2,j}^{\ast}(q_{k})<0$ for $1\leq j\leq 2n-1$ and the first $2(m-n+1)$
are negative by some fixed multiple of $q_{k}$ 
(by assumption $\{ R_{k+1}\}\cup \mathscr{P}_{k}$
spans $\mathbb{R}^{2n-1}$ and for $P\in\mathscr{P}_{k}$ we have shown $L_{P}^{\ast}(q_{k})<-cq_{k}$ for some $c>0$). 
On the other hand $R_{k+1}$ induces an approximation of 
quality $-\log \vert R_{k+1}(\zeta)\vert/ \log H(R_{k+1})>2n-2$, 
and by equating the two expressions in \eqref{eq:piscibus}
in our present dimension $m=2n-2$, this leads to 
$L_{R_{k+1}}^{\ast}(r_{k+1})<0$
at the local
minimum $r_{k+1}$ of the function $L_{R_{k+1}}^{\ast}$.
Thus indeed we deduce that $r_{k+1}>q_{k}$ and $L_{R_{k+1}}^{\ast}$ decays at $q_{k}$.
Hence we also have
\[
\log H_{k+1}-\frac{q_{k}}{2n-2}= L_{R_{k+1}}^{\ast}(q_{k}).
\]
Clearly $L_{R_{k+1}}^{\ast}(q_{k})$ is the maximum among  $L_{S}^{\ast}(q_{k})$ for 
$S\in \{ R_{k+1}\}\cup \mathscr{P}_{k}$ since it is the only positive value.
With \eqref{eq:jojo} and since $\{R_{k+1},\mathscr{P}_{k}\}$ span $\mathbb{R}^{2n-1}$ we infer
\[
 L_{2n-2,2n-1}^{\ast}(q_{k}) \leq L_{R_{k+1}}^{\ast}(q_{k}) \leq (\nu+\epsilon)\log H_{k}-\frac{q_{k}}{2n-2}.
\]
and further combination with \eqref{eq:jordan} yields
\begin{equation}  \label{eq:umdrehen}
 L_{2n-2,2n-1}^{\ast}(q_{k}) \leq 
(\nu+\epsilon)\cdot (\tau_{k}+\tilde{\epsilon}q_{k})-\frac{q_{k}}{2n-2},
\end{equation}
where 
\[
\tau_{k}=\frac{q_{k}}{2n-2}+
q_{k}\cdot\frac{2n-2-\widehat{w}_{n}(\zeta)}{(2n-2)(1+\widehat{w}_{n}(\zeta))}
=q_{k}\cdot \frac{2n-1}{(2n-2)(1+\widehat{w}_{n}(\zeta))}.
\]
The bounds in the right hand sides of \eqref{eq:hand} and \eqref{eq:umdrehen} depend 
on $q_{k}$ and $\widehat{w}_{n}(\zeta)$ only.
Comparison of these two estimates and some
rearrangements yield the estimate
\[
(2n-1)\cdot(\widehat{w}_{n}(\zeta)^{2}-3(n-1)\widehat{w}_{n}(\zeta)+2n^{2}-4n+1)
+\epsilon^{\prime}\Phi(\widehat{w}_{n}(\zeta)) \leq 0,
\]
where $\epsilon^{\prime}$ is again some variation of $\epsilon$ which tends to $0$ as $\epsilon$ does 
and $\Phi(\widehat{w}_{n}(\zeta))$ is some bounded expression when $\widehat{w}_{n}(\zeta)$ is bounded. In the calculation we used
that $q_{k}$ cancels
out since we may treat $O(1)$ as $o(q_{k})$, and 
incorporated $\widehat{w}_{n}(\zeta)-(2n-2)>0$ by assumption.
Since we may take $\epsilon$ and thus $\epsilon^{\prime}$ arbitrarily small, we see that the larger root
of the quadratic polynomial, which is an upper 
bound for $\widehat{w}_{n}(\zeta)$, will be arbitrarily 
close to $\theta_{n}$. This contradicts 
our assumption $\widehat{w}_{n}(\zeta)>\theta_{n}$. 
Thus the proof of case 1 is finished.

Case 2: For all large $k$ the span of $\mathscr{R}_{k}$ is not the entire space
of polynomials of degree at most $2n-2$. Since $\mathscr{P}_{k}$ is a hyperspace
this means that the span of $\mathscr{R}_{k}$ coincides with the span of $\mathscr{P}_{k}$,
in fact the span of all $\mathscr{P}_{k}$ and $\mathscr{R}_{k}$ (or their union)
must be a constant hyperspace of $\mathbb{R}^{2n-1}$ for all $k\geq k_{0}$.
Suppose this is the case. For $k\geq 2$ and
$q_{k}$ defined as in case 1, consider the intervals $I_{k}:=[q_{k-1},q_{k}]$.
Recall \eqref{eq:hehe}, for which we did not use any linear independence arguments of case 1.
Moreover recall we showed in the proof of case 1 that our assumption $\widehat{w}_{n}(\zeta)>\theta_{n}$ implies
\begin{equation} \label{eq:casa1}
\max_{P\in\mathscr{P}_{k}} L_{P}^{\ast}(q_{k})=  L_{P_{k}}^{\ast}(q_{k})< L_{2n-2,2n-1}^{\ast}(q_{k}).
\end{equation}
On the other hand the function $L_{P_{k}}^{\ast}$ decays in the interval $I_{k}$ (in fact in $[0,q_{k}]$)
with slope $-1/(2n-2)$, and at $q=q_{k}$ it meets the rising phase of $L_{P_{k-1}}^{\ast}$. More generally
the functions $L_{P}^{\ast}$ for $P\in\mathscr{V}_{k}$ decay in $[0,q_{k}]$ with slope $-1/(2n-2)$
and it obviously follows that
\[
\max_{P\in\mathscr{P}_{k}} L_{P}^{\ast}(q)= L_{P_{k}}^{\ast}(q), \qquad q\in[0,q_{k}].
\]
In fact all values $L_{P}^{\ast}(q)$ for $P\in\mathscr{P}_{k}$ coincide in $[0,q_{k}]$ as their heights are equal.
Hence, as the slope of $L_{2n-2,2n-1}^{\ast}$ cannot be smaller than $-1/(2n-2)$, 
the estimate \eqref{eq:casa1} implies that  we have $L_{P}^{\ast}(q)< L_{2n-2,2n-1}^{\ast}(q)$
for all $P\in\mathscr{P}_{k}$ in the entire interval $I_{k}\ni{q}$.
On the other hand we have shown that the set $\mathscr{P}_{k}=\mathscr{V}_{k-1}\cup \mathscr{V}_{k}$ 
spans a $(2n-2)$-dimensional hyperspace and thus
\begin{equation}  \label{eq:casa2}
L_{2n-2,2n-2}^{\ast}(q)\leq \max_{P\in\mathscr{P}_{k}} L_{P}^{\ast}(q)= L_{P_{k}}^{\ast}(q), \qquad q\in I_{k}.
\end{equation}
Combining \eqref{eq:casa1} and \eqref{eq:casa2}, 
we infer the strict inequality
$L_{2n-2,2n-2}^{\ast}(q)<L_{2n-2,2n-1}^{\ast}(q)$ for all $q\in{I_{k}}$ when $k$ is large. 
Since this argument holds for all large $k$ and $\cup_{j\geq k} I_{j}=[q_{k-1},\infty]$, we derive
\[
L_{2n-2,2n-2}^{\ast}(q)<L_{2n-2,2n-1}^{\ast}(q), \qquad q\geq \tilde{q}.
\]
This contradicts~\cite[Theorem~1.1]{ss}
which directly implies that $L_{m,j}^{\ast}(q)=L_{m,j+1}^{\ast}(q)$ has arbitrarily large solutions $q$ for
any integer pair $(m,j)$ with $m\geq 1$ and $1\leq j\leq m$, unless $\zeta$ is algebraic of degree at most $m$. 
Thus the assumption of case 2 cannot occur for 
transcendental $\zeta$
when $\widehat{w}_{n}(\zeta)>\theta_{n}$, 
and this case is proved as well. For algebraic $\zeta$ we
know the better bounds from \eqref{eq:alg} anyway.
\end{proof}

\begin{remark} \label{dadpr}
As quoted in the proof of Lemma~\ref{lehmer},
for any transcendental $\zeta$ there exist
infinitely many $k$ such that two successive best approximation polynomials
$P_{k-1}^{n,\zeta}$ and $P_{k}^{n,\zeta}$ have no common factor. However,
for our method in case 2 to work, we had to guarantee this property
for all large $k$. Thus \eqref{eq:eqq} was needed. On the
other hand, the
fact that the degrees of $P_{k-1}^{n,\zeta}$ and $P_{k}^{n,\zeta}$ are precisely $n$ 
can be avoided by the argument used in the proof
of $\Psi(n)\leq 2n-1$ in Lemma~\ref{lehmer}.
\end{remark}

\begin{remark} \label{dierem}
The method of the proof can be used to infer
\begin{equation} \label{eq:nichtgut}
\frac{w_{n}(\zeta)}{\widehat{w}_{n}(\zeta)}\geq \widehat{w}_{n}(\zeta)-2n+3, \qquad \text{if} \;\; \widehat{w}_{n}(\zeta)>2n-2.
\end{equation}
For $n=2$ this affirms the estimate $w_{2}(\zeta)\geq \widehat{w}_{2}(\widehat{w}_{2}(\zeta)-1)$ known by Jarn\'ik.
However, for $n\geq 3$ it can be checked that for 
\eqref{eq:nichtgut} to be better than \eqref{eq:schs}
requires
$\widehat{w}_{n}(\zeta)$ to be larger than the upper bound 
in \eqref{eq:frwe}. Hence, in contrast
to Theorem~\ref{gutsatz}, no new information
on the quotient $w_{n}(\zeta)/\widehat{w}_{n}(\zeta)$
is obtained.
\end{remark}

In dimension $m=2n-2$ as in the proof, we cannot
expect something better than 
$\widehat{w}_{n}(\zeta)\leq 2n-2+o(1)$ with 
positive remainder term.
If the essential arguments of the proof
can be transferred to the situation of some dimension $m<2n-2$,
with $m$ not too small compared to $n$,
we would expect to obtain some better bound, as in
Theorem~\ref{gutsatz} and Theorem~\ref{cumbersome}.
For example when $m=3\frac{n}{2}-1$, the expected
bounds turn out be just as in \eqref{eq:rhs}.
The difficulty when choosing $m<2n-2$
is to guarantee the linear independence of a sufficiently large subset
of the polynomials defined similarly as $\mathscr{R}_{k}$.
This was an essential step to obtain reasonable bounds with 
Minkowski's Second Convex Body
Theorem (or parametric geometry of numbers). 
Indeed,
if $m<2n-2$, the codimension of the accordingly
modified set $\mathscr{P}_{k}$ 
in the space of polynomials at most $m$ 
turns out to be $2n-1-m>1$, and the argument of case 2
from the proof of Theorem~\ref{thm} fails and yet we do not know
how to modify it. 
In the proof of case 1 of Theorem~\ref{thm}, we
showed that if $\Psi(n,\zeta)\leq 2n-2$, then
$\widehat{w}_{n}(\zeta)$ cannot exceed
$\theta_{n}$. Some additional argument was needed
in case 2 to derive the same estimate from $\Psi(n,\zeta)=2n-1$.
Concerning Theorem~\ref{gutsatz}, its main assumption is
essentially equivalent to
$\Psi(n,\zeta)=3\frac{n}{2}-1$ and resolves the mentioned
problems from case 2. 
The additional assumption \eqref{eq:burt} enters
primarily to guarantee \eqref{eq:puschel}. If otherwise \eqref{eq:burt} fails, that is
$w_{n}(\zeta)=w_{n-1}(\zeta)$,
we derive \eqref{eq:wehave} easily from \eqref{eq:mundn}
for $n\geq 4$, as in 
the deduction of~\cite[Theorem~2.1]{buschl}.
In the proof we will denote by $\epsilon_{i}$
certain small variations of $\epsilon$, and state 
beforehand that any of them tends to $0$ as $\epsilon$ does.
 
\begin{proof}[Proof of Theorem~\ref{gutsatz}]
Suppose $\widehat{w}_{n}(\zeta)>\sigma_{n}$ holds
for some real $\zeta$. We can again assume $\zeta$ is transcendental
in view of \eqref{eq:alg}.
Let $m=\frac{3n}{2}-1$ and consider the 
combined Schmidt-Summerer graph associated to 
$(m,\zeta)$. 
For $(P_{k})_{k\geq 1}=(P_{k}^{n,\zeta})_{k\geq 1}$ the sequence of
best approximation polynomials associated to $(n,\zeta)$,
we again define $(q_{k})_{k\geq 1}$ the sequence of points where $L_{P_{k-1}}^{\ast}$ meets $L_{P_{k}}^{\ast}$,
that is $q_{k}$ is implicitly defined by
\[
L_{P_{k-1}}^{\ast}(q_{k})=L_{P_{k}}^{\ast}(q_{k})=
\log H(P_{k})-\frac{q_{k}}{m}= \log \vert P_{k-1}(\zeta)\vert + q_{k}.
\]
Let further
\[
V_{l,j}(T)= T^{j}P_{l}(T), \qquad\qquad l\geq 1, \quad 0\leq j\leq m-n,
\]
and derive the sets $\mathscr{V}_{l},\mathscr{P}_{l}$ 
and $\mathscr{R}_{l}$ very
similarly as in the proof of Theorem~\ref{thm}, 
which are now of cardinality 
$m-n+1, 2(m-n+1)$ and $3(m-n+1)$ respectively.
For the same reason as in Theorem~\ref{thm} we have
\eqref{eq:glei}.
Again without loss of generality let $\zeta\in(0,1)$, 
such that for any $l\geq 1$
the value $\vert P_{l}(\zeta)\vert$ maximizes $\vert P(\zeta)\vert$ among $P\in\mathscr{V}_{l}$.
Now since \eqref{eq:ingung} holds by assumption,
and by the choice of $m$ 
we have $\sharp \mathscr{R}_{k}=3(m-n+1)=m+1$,
the set $\mathscr{R}_{k}$ is linearly independent and
spans the space of polynomials of degree at most $m$. In particular $\mathscr{P}_{k}\subseteq \mathscr{R}_{k}$
is linearly independent and thus,
very similar to \eqref{eq:jetzaber}, from
\eqref{eq:umrechnen} and \eqref{eq:glei} we obtain
\begin{equation} \label{eq:jetztaber}
L_{m,2(m-n+1)}^{\ast}(q_{k}) \leq L_{P_{k}}^{\ast}(q_{k})\leq 
q_{k}
\cdot \frac{m-\widehat{w}_{n}(\zeta)}{m(1+\widehat{w}_{n}(\zeta))}+\epsilon_{0}q_{k}.
\end{equation}
With \eqref{eq:gimme} applied for $j=2(m-n+1)$ we conclude            
\begin{equation}  \label{eq:handa}
L_{m,m+1}^{\ast}(q_{k}) \geq 
\left(-\frac{2(m-n+1)}{m+1-2(m-n+1)}\right)\cdot
q_{k}\cdot \frac{m-\widehat{w}_{n}(\zeta)}{m(1+\widehat{w}_{n}(\zeta))}+\epsilon_{1}q_{k}+O(1).
\end{equation}
Now again let $H_{l}=H(P_{l})$ for any integer $l\geq 1$.
We again infer
\[
\log H_{k}-\frac{q_{k}}{m}= L_{P_{k}}^{\ast}(q_{k}),
\]
and together with \eqref{eq:jetztaber} we derive
\begin{equation} \label{eq:jordansche}
\log H_{k}\leq \frac{q_{k}}{m}+
q_{k}\cdot \frac{m-\widehat{w}_{n}(\zeta)}{m(1+\widehat{w}_{n}(\zeta))}+\epsilon_{2}q_{k}
=q_{k}\left(\frac{m+1}{m(1+\widehat{w}_{n}(\zeta))}+\epsilon_{2}\right).
\end{equation}
Let $R\in\mathscr{V}_{k+1}$ arbitrary and put $r_{k+1}$ the local minimum of $L_{R}^{\ast}$.
Now observe that $\sigma_{n}$ is larger than the 
dimension $m=\frac{3n}{2}-1$.
Hence the right hand side of \eqref{eq:jetztaber} is negative, and 
moreover in view of \eqref{eq:umrechnen} at the local 
minimum $r_{k+1}$ of $L_{R}^{\ast}$
we also have $L_{R}^{\ast}(r_{k+1})<0$. 
On the other hand again $L_{R}^{\ast}(q_{k})>0$, as we carry out. 
Since by their definition for any $P,Q\in\mathscr{V}_{l}$ 
we have $H(P)=H(Q)$ and the evaluations $P(\zeta)$ and $Q(\zeta)$
differ only by a multiplicative constant, it follows from \eqref{eq:piscibus}
that $\vert L_{P}^{\ast}(q)-L_{Q}^{\ast}(q)\vert \ll 1$ uniformly on $q\in(0,\infty)$.
Thus if we had $L_{R}^{\ast}(q_{k})\leq 0$ then by the linear independence of $\mathscr{R}_{k}$
we have all $L_{m,j}^{\ast}(q_{k})\ll 1$ for $1\leq j\leq m+1$, and the 
first $2(m-n+1)$ values $1\leq j\leq 2(m-n+1)$
are even bounded above by $(-c+o(1))q_{k}$ for some fixed $c>0$
independent from $k$
in view of \eqref{eq:jetztaber}. Hence the sum
of $L_{m,j}^{\ast}(q_{k})$ over $1\leq j\leq m$ is at most
$(-c+o(1))q_{k}+O(1)$,
contradicting \eqref{eq:lsumme} for large $k$.
Hence the claim is shown. 
We conclude that $r_{k+1}>q_{k}$ and $L_{R}^{\ast}$ still decays at $q_{k}$, and hence
\[
\log H_{k+1}-\frac{q_{k}}{m}= L_{R}^{\ast}(q_{k}), \qquad R\in\mathscr{V}_{k+1}.
\]
For now assume $w_{n}(\zeta)>w_{n-1}(\zeta)$. Then
we may apply \eqref{eq:puschel},
and as in the proof Theorem~\ref{thm} 
with Lemma~\ref{ehklar} we infer \eqref{eq:jojo}. Hence
again since $\mathscr{R}_{k}$ spans the entire space of polynomials of 
degree at most $m$ from \eqref{eq:jordansche} we derive
\begin{equation}  \label{eq:umdrehena}
 L_{m,m+1}^{\ast}(q_{k}) \leq L_{R}^{\ast}(q_{k})\leq 
(\nu+\epsilon)\cdot (\tau_{k}+\epsilon_{2}q_{k})-\frac{q_{k}}{m},
\end{equation}
where 
\[
\tau_{k}
=q_{k}\cdot \frac{m+1}{m(1+\widehat{w}_{n}(\zeta))}.
\]
We combine \eqref{eq:handa} and \eqref{eq:umdrehena}, 
which leads after some computation to
\[
(1+m)\cdot(\widehat{w}_{n}(\zeta)^{2}+(n-2m-1)\widehat{w}_{n}(\zeta)+(-4n^{2}-1+3mn-m+4n))+
\epsilon_{3}\Phi(\widehat{w}_{n}(\zeta))\leq 0,
\]
where again $\Phi$ is bounded. We insert $m=\frac{3}{2}n-1$ and obtain
\begin{equation} \label{eq:inss}
\frac{3n}{4}\cdot\left(2\widehat{w}_{n}(\zeta)^{2}+(2-4n)\widehat{w}_{n}(\zeta)+n^{2}-n\right)+\epsilon_{3}\Phi(\widehat{w}_{n}(\zeta))\leq 0.
\end{equation}
For $\epsilon_{3}=0$, the quadratic inequality
is satisfied precisely
for $\widehat{w}_{n}(\zeta)\leq \sigma_{n}$ 
with $\sigma_{n}$ in \eqref{eq:rhs},
such that our assumption of
strict inequality $\widehat{w}_{n}(\zeta)>\sigma_{n}$ 
yields a contradiction if we start with $\epsilon$ 
sufficiently small.

Now assume $w_{n}(\zeta)=w_{n-1}(\zeta)$.
Then 
$\widehat{w}_{n}(\zeta) \leq 2n-2$ by \eqref{eq:mundn} 
applied with $n_{1}=n, n_{2}=n-1$, as in~\cite[Theorem~2.1]{buschl}. 
Hence
\[
\widehat{w}_{n}(\zeta) \leq \max \left\{ 2n-2, \frac{2n-1+\sqrt{2n^2-2n+1}}{2} \right\}.
\]
It can be readily checked that for $n=4$ both bounds coincide and for $n\geq 6$
the bound $2n-2$ is larger.

Finally we sketch how to derive \eqref{eq:derive}. For
any $\widehat{w}_{n}(\zeta)>m$ we can 
proceed as in the proof to obtain \eqref{eq:handa}. Moreover,
Lemma~\ref{ehklar} again yields \eqref{eq:umdrehena} where in place
of $\nu$ from \eqref{eq:jojo}
we have to keep the expression 
$w_{n}(\zeta)/\widehat{w}_{n}(\zeta)$. 
Combining these two estimates yields the claim, we skip the
computation.
\end{proof}

The main ideas of the proof of Theorem~\ref{cumbersome}
are again very similar. For convenience we state two
easy propositions first. 
The first one, as well as its proof, is closely related
to Lemma~\ref{lehmer} and Lemma~\ref{lainyweg}.

\begin{proposition} \label{babi}
Let $n\geq 2$
and integer and $\zeta$ a transcendental real number.
Use the notation of Definition~\ref{dadp}. 
For any integer $m\geq n$ define the set
\[
\mathscr{C}_{k,m}^{n,\zeta}\; :=\; \mathscr{A}_{k-1,m}^{n,\zeta}\cup \mathscr{A}_{k,m}^{n,\zeta}\; \subseteq \; \mathscr{B}_{k,m}^{n,\zeta}, \qquad \qquad k\geq 2.
\]
Then for all large $k$ set $\mathscr{C}_{k,m}^{n,\zeta}$ spans a vectorspace of dimension
at least $m-n+2$.
If
$\widetilde{\Psi}(n,\zeta)\leq m\leq 2n-1$, then for infinitely many $k$ the 
set $\mathscr{C}_{k,m}^{n,\zeta}$
spans a space of dimension at least
$2(m-n+1)$. 
\end{proposition}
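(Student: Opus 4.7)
The plan is to compute $\dim \operatorname{span}(\mathscr{C}_{k,m}^{n,\zeta})$ exactly by inclusion--exclusion, separating the role of $G=\gcd(P_{k-1}^{n,\zeta}, P_k^{n,\zeta})$. Put $d_l = \deg P_l^{n,\zeta}$ and let $g=\deg G$. The span of $\mathscr{A}_{l,m}^{n,\zeta}$ is precisely the space of polynomial multiples of $P_l^{n,\zeta}$ of degree at most $m$, hence has dimension $m - d_l + 1$. The intersection of the two spans consists of polynomials of degree at most $m$ divisible by $\operatorname{lcm}(P_{k-1}^{n,\zeta}, P_k^{n,\zeta})$, a polynomial of degree $d_{k-1} + d_k - g$, so has dimension $\max\{0, m - d_{k-1} - d_k + g + 1\}$. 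Inclusion--exclusion then yields
\[
\dim \operatorname{span}(\mathscr{C}_{k,m}^{n,\zeta}) =
\begin{cases} m - g + 1 & \text{if } d_{k-1} + d_k - g \leq m, \\
 2m - d_{k-1} - d_k + 2 & \text{if } d_{k-1} + d_k - g > m. \end{cases}
\]

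The key input is that $g \leq n-1$ for every $k \geq 2$. Indeed, $g=n$ would force $d_{k-1}=d_k=n$ and $P_{k-1}^{n,\zeta} = \lambda P_k^{n,\zeta}$ for some $\lambda \in \mathbb{Q}\setminus\{0\}$; comparing heights and values at $\zeta$ on both sides gives $H(P_{k-1})/H(P_k)=|\lambda|=|P_{k-1}(\zeta)|/|P_k(\zeta)|$, which contradicts the strict inequalities $H(P_{k-1})<H(P_k)$ and $|P_k(\zeta)|<|P_{k-1}(\zeta)|$ enforced by Definition~\ref{newdef}. Once $g \leq n-1$ is in hand, together with $d_{k-1}, d_k \leq n$, both branches of the formula give $\dim \geq m - n + 2$ (using the hypothesis $m \geq n$ in the second branch), which establishes the first claim for every $k \geq 2$.

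For the second claim, the defining property of $\widetilde{\Psi}(n,\zeta) \leq m$ provides infinitely many $k$ for which $P_{k-1}^{n,\zeta}$ and $P_k^{n,\zeta}$ are coprime, i.e.\ $g=0$ (this is exactly the strengthening of $\operatorname{span}(m)$ to $\widetilde{\operatorname{span}}(m)$ via Remark~\ref{hirsch}). For such $k$ the formula reduces to $m+1$ if $d_{k-1}+d_k \leq m$ and to $2m - d_{k-1} - d_k + 2$ otherwise. In the first subcase, the hypothesis $m \leq 2n-1$ yields $m+1 \geq 2(m-n+1)$; in the second, the bounds $d_{k-1}, d_k \leq n$ give $2m - d_{k-1} - d_k + 2 \geq 2(m-n+1)$ directly.

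I do not anticipate a serious obstacle: both claims reduce to the same controlled inclusion--exclusion bookkeeping, and the only slightly delicate point is ruling out $g=n$, which follows immediately from the defining strict inequalities of the best approximation sequence.
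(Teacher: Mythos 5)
Your proof is correct, but it takes a different route from the paper. You compute the dimension of the span of $\mathscr{C}_{k,m}^{n,\zeta}$ \emph{exactly}: the span of $\mathscr{A}_{l,m}^{n,\zeta}$ is the space of multiples of $P_{l}^{n,\zeta}$ of degree at most $m$, and inclusion--exclusion on the two subspaces reduces everything to the degree $g$ of $\gcd(P_{k-1}^{n,\zeta},P_{k}^{n,\zeta})$, with the only input being $g\leq n-1$ (your proportionality argument, which is exactly the linear independence of $P_{k-1}^{n,\zeta},P_{k}^{n,\zeta}$ forced by the strict monotonicity of heights and of $\vert P_{k}(\zeta)\vert$) and, for the second claim, $g=0$ for infinitely many $k$ supplied by $\widetilde{\Psi}(n,\zeta)\leq m$ (together with the monotonicity statement of Lemma~\ref{lehmer}, or simply because coprimality does not involve $m$). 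The paper instead produces explicit linearly independent subsets: for the first claim the set $\{P_{k-1}^{n,\zeta}\}\cup\{T^{j}P_{k}^{n,\zeta}:0\leq j\leq m-d_{k}\}$ of cardinality $m-d_{k}+2$, and for the second claim either a subset of the set $\Omega_{k}$ from Lemma~\ref{lainyweg} (when $m\leq d_{k-1}+d_{k}-1$) or $\Omega_{k}$ augmented by higher-degree shifts of $P_{k}^{n,\zeta}$ (when $m>d_{k-1}+d_{k}-1$), counting cardinalities in each case. Your approach buys a sharper statement -- the exact dimension $m-g+1$ (resp. $2m-d_{k-1}-d_{k}+2$), which immediately yields the interpolation in terms of a bound on the degree of the common factor mentioned in the remark following the proposition -- while the paper's construction is more hands-on and reuses verbatim the degree-shifting device already employed in the proof of Lemma~\ref{lehmer}; in particular it never needs the lcm/intersection computation, only that polynomials of strictly increasing degree added to an independent set keep it independent.
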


\begin{proof}
Without loss of generality assume $d_{k-1}\leq d_{k}$
for $d_{k-1}$ and $d_{k}$ the degrees of
$P_{k-1}^{n,\zeta}$ and $P_{k}^{n,\zeta}$ respectively, 
otherwise alter the set $\widetilde{\Omega}_{k}$ below accordingly.
For the first claim, it suffices to consider the set
\begin{equation} \label{eq:sieheoben}
\widetilde{\Omega}_{k}:= 
\left\{ P_{k-1}^{n,\zeta},\; P_{k}^{n,\zeta},\; TP_{k}^{n,\zeta},
\; T^{2}P_{k}^{n,\zeta},\ldots,\;
T^{m-d_{k}}P_{k}^{n,\zeta}\right\}\subseteq \mathscr{C}_{k,m}^{n,\zeta}.
\end{equation}
Since $P_{k-1}^{n,\zeta},P_{k}^{n,\zeta}$ are linearly independent,
it is easily seen that $\widetilde{\Omega}_{k}$ is
as well, and has cardinality $\sharp \widetilde{\Omega}_{k}=m-d_{k}+2\geq m-n+2$.
We need to show the second claim.
By assumption $m\geq \widetilde{\Psi}(n,\zeta)$ we may assume
that $P_{k-1}^{n,\zeta}$ and $P_{k}^{n,\zeta}$ have no common
factor for certain arbitrarily large $k$. For such $k$,
as pointed out in the proof of Lemma~\ref{lehmer},
the set $\Omega_{k}$ in \eqref{eq:lemmer}
of polynomials of degree at most
$d_{k-1}+d_{k}-1$
is linearly independent. We now distinguish two
cases. Case 1: $m\leq d_{k-1}+d_{k}-1$. Then 
\[
\left\{  P_{k-1}^{n,\zeta},\; TP_{k-1}^{n,\zeta},\ldots,\; 
T^{m-d_{k}}P_{k-1}^{n,\zeta},\;
P_{k}^{n,\zeta},\; TP_{k}^{n,\zeta},\ldots,\;
T^{m-d_{k-1}}P_{k}^{n,\zeta}\right\}
\]
is a subset of $\Omega_{k}$, consisting
of polynomials of degree at most $m$. It is
obviously linearly independent (as $\Omega_{k}$ is) and has cardinality 
$(m-d_{k}+1)+(m-d_{k-1}+1)\geq 2(m-n+1)$ since
$d_{i}\leq n$. Case 2: $m>d_{k-1}+d_{k}-1$.
Then consider the set
\[
\Omega_{k}^{\prime}:=\left\{  P_{k-1}^{n,\zeta},\; TP_{k-1}^{n,\zeta},\ldots,\; 
T^{d_{k}-1}P_{k-1}^{n,\zeta},\;
P_{k}^{n,\zeta},\; TP_{k}^{n,\zeta},
\ldots,\; T^{m-d_{k}}P_{k}^{n,\zeta}\right\}
\]
derived from $\Omega_{k}$ by adding certain polynomials of higher degree. Similar to \eqref{eq:sieheoben},
it is easy to see that $\Omega_{k}^{\prime}$ is linearly independent again, as any polynomial in 
$\Omega_{k}^{\prime}\setminus \Omega_{k}$ has strictly larger degree
than any polynomial in $\Omega_{k}$ and the new degrees are all different.
Moreover $\Omega_{k}^{\prime}$ has cardinality
$d_{k}+(m-d_{k}+1)=m+1\geq 2(m-n+1)$ since $m\leq 2n-1$ by
assumption.
\end{proof}

\begin{remark}
The bound $m-n+2$ of the first claim is sharp in case 
$P_{k-1}^{n,\zeta},P_{k}^{n,\zeta}$ are of degree $n$ and
have a common factor of maximum degree $n-1$. Prescribing an upper bound $d$ on the common factor
results in lower dimension estimates in terms of $d$ in the range
between $m-n+2$ and $2(m-n+1)$.
\end{remark}

Our second preparatory result is
about extensions of linearly independent sets to bases,
and almost a triviality.

\begin{proposition} \label{t}
Let $l\leq s$ be integers and $v_{1},\ldots,v_{s}$ be vectors 
that span
a vectorspace $\mathscr{S}$ of dimension $t\leq s$. Assume 
$v_{1},\ldots,v_{l}$ are linearly independent. Then
we can find $t-l$ vectors $w_{1},\ldots,w_{t-l}$
among $v_{l+1},\ldots,v_{t}$
with the property that $v_{1},\ldots,v_{l},w_{1},\ldots,w_{t-l}$
span $\mathscr{S}$.
\end{proposition}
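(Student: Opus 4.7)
The plan is to proceed by the standard greedy basis extension argument; this is indeed a direct consequence of the Steinitz exchange principle applied to the subspace $\mathscr{S}$. First I would observe that since $v_1, \ldots, v_l$ are linearly independent and lie in $\mathscr{S}$, necessarily $l \leq t$, so the quantity $t - l$ is nonnegative and the statement makes sense. I interpret the range of indices $v_{l+1}, \ldots, v_t$ in the statement as $v_{l+1}, \ldots, v_s$ (otherwise the claim is vacuously constrained, as only $t-l$ vectors would be available and any of them lying in $\mathrm{span}(v_1, \ldots, v_l)$ would force failure); the spanning hypothesis is used with all $s$ vectors.

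Next I would construct the vectors $w_1, \ldots, w_{t-l}$ inductively. Set $W_0 := \{v_1, \ldots, v_l\}$, which is linearly independent by hypothesis and spans a subspace of dimension $l$. Given an already constructed linearly independent set $W_{j-1} = \{v_1, \ldots, v_l, w_1, \ldots, w_{j-1}\}$ of cardinality $l + j - 1 < t$, its span is a proper subspace of $\mathscr{S}$. Since $v_1, \ldots, v_s$ span $\mathscr{S}$, there must exist some index $i \in \{l+1, \ldots, s\}$ (the first $l$ vectors lie in $\mathrm{span}(W_{j-1})$ by construction) such that $v_i \notin \mathrm{span}(W_{j-1})$. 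Choose any such $v_i$, set $w_j := v_i$, and let $W_j := W_{j-1} \cup \{w_j\}$. The standard criterion that adjoining a vector outside the current span preserves linear independence ensures that $W_j$ is again linearly independent.

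After exactly $t - l$ steps the set $W_{t-l}$ is a linearly independent subset of $\mathscr{S}$ of cardinality $t = \dim \mathscr{S}$, and thus constitutes a basis of $\mathscr{S}$. In particular the vectors $v_1, \ldots, v_l, w_1, \ldots, w_{t-l}$ span $\mathscr{S}$, as required. There is no real obstacle here beyond checking that the greedy choice can always be performed; this is ensured at each stage by the strict inequality $\dim \mathrm{span}(W_{j-1}) = l + j - 1 < t = \dim \mathscr{S}$ combined with the global spanning hypothesis on $v_1, \ldots, v_s$.
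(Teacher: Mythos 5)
Your proof is correct and is essentially the same argument as the paper's: the paper picks a maximal linearly independent subset containing $v_{1},\ldots,v_{l}$ among the given vectors and notes it must have cardinality exactly $t$, while you realize the same extension constructively by a greedy induction, both resting on the fact that a proper subspace of $\mathscr{S}$ cannot contain all of $v_{1},\ldots,v_{s}$. Your reading of the index range as $v_{l+1},\ldots,v_{s}$ (rather than $v_{l+1},\ldots,v_{t}$) is the intended one and matches how the paper uses the result.
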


\begin{proof}
Consider any maximum linear independent set containing
$v_{1},\ldots,v_{l}$ by adding
some remaining vectors $v_{j}, l<j\leq k$.
Clearly this set has cardinality at most
$t$, as it is linearly independent and
its span is contained in $\mathscr{S}$. On the other hand,
it must have cardinality at least $t$, otherwise
we could add some element not in the span to increase the dimension. 
After some relabeling the arising set has the desired property.
\end{proof}

Now we turn to the proof of Theorem~\ref{cumbersome}.

\begin{proof}[Proof of Theorem~\ref{cumbersome}]
We start with \eqref{eq:jor}. 
For simplicity let $m=\widetilde{\Psi}(n,\zeta)$.
By definition of $\widetilde{\Psi}(n,\zeta)$, there exist
arbitrarily large $k$ for which
two successive $P_{k-1}^{n,\zeta},
P_{k}^{n,\zeta}$ are coprime and $\mathscr{B}_{k,m}^{n,\zeta}$
defined via \eqref{eq:zingung} and \eqref{eq:zwanzig}
span the space of polynomials of degree at most $m$.
Fix such $k$ large enough. By Proposition~\ref{babi} the space
spanned by $\mathscr{C}_{k,m}^{n,\zeta}\subseteq \mathscr{B}_{k,m}^{n,\zeta}$ has dimension 
at least $2(m-n+1)$. Keep $\mathscr{V}_{l},\mathscr{P}_{l}, \mathscr{R}_{l}$ for $l\geq 1$ from
the proof of Theorem~\ref{gutsatz}.
By assumption
and Proposition~\ref{t} applied to $s=3(m-n+1),l=2(m-n+1)$ 
and the vectors
$\{v_{1},\ldots,v_{s}\}=\mathscr{R}_{k}$ and
$\{v_{1},\ldots,v_{l}\}=\mathscr{P}_{k}$,
there exist $m+1-2(m-n+1)$ remaining polynomials
among $\mathscr{V}_{k+1}$ which together with 
$\mathscr{P}_{k}$ spans the space of polynomials of
degree at most $m$. 
Moreover, 
\eqref{eq:ingung} holds again by assumption, 
and we can again apply Lemma~\ref{ehklar}.
Again combining these arguments yields \eqref{eq:umdrehena}.
We then proceed as in the proof of Theorem~\ref{gutsatz} up
to the point where we inserted $m$ to obtain \eqref{eq:inss}.
We solve the general quadratic equation in terms of $m,n$ and 
obtain the claimed bound \eqref{eq:jor}.

Now we show \eqref{eq:fehlt}. From the first claim of
Proposition~\ref{babi} the set $\mathscr{C}_{k,m}^{n,\zeta}$
spans a space of dimension at least $m-n+2$. 
We essentially proceed as in
the proof of Theorem~\ref{gutsatz} again, 
but taking into account the lower cardinality in place of
\eqref{eq:jetztaber} and \eqref{eq:handa} we obtain
\[
L_{m,m-n+2}^{\ast}(q_{k}) \leq L_{P_{k}}^{\ast}(q_{k})\leq 
q_{k}
\cdot \frac{m-\widehat{w}_{n}(\zeta)}{m(1+\widehat{w}_{n}(\zeta))}+\epsilon_{0}q_{k}.
\]
and consequently
\[
L_{m,m+1}^{\ast}(q_{k}) \geq 
\left(-\frac{m-n+2}{m+1-(m-n+2)}\right)\cdot
q_{k}\cdot \frac{m-\widehat{w}_{n}(\zeta)}{m(1+\widehat{w}_{n}(\zeta))}+\epsilon_{1}q_{k}+O(1).
\]
On the other hand we obtain \eqref{eq:umdrehena}
precisely as above. Again combination and some calculation
yields the bound in \eqref{eq:fehlt}. 
\end{proof}

%
%

%
%

%




\end{document}